\newcommand\inputpgf[2]{{
\let\pgfimageWithoutPath\pgfimage
\renewcommand{\pgfimage}[2][]{\pgfimageWithoutPath[##1]{#1/##2}}
\input{#1/#2}
}}
\newcommand{\N}{\ensuremath{\mathbb{N}}}
\newcommand{\R}{\ensuremath{\mathbb{R}}}
\newcommand{\Z}{\ensuremath{\mathbb{Z}}}
\newcommand{\E}{\ensuremath{\mathbb{E}}}
\renewcommand{\P}{\ensuremath{\mathbb{P}}}
\newcommand{\ind}[1]{\ensuremath{\mathbbm{1}_{\left\{#1\right\}}}}
\newcommand{\diff}{\mathop{}\mathopen{}\mathrm{d}}
\newcommand{\cal}[1]{\ensuremath{\mathcal{#1}}}
\newcommand\croc[1]{\left\langle #1\right\rangle}
\newcommand\steq[1]{\stackrel{\text{\rm #1.}}{=}}
\def\eps{\varepsilon}
\def\cadlag{c\`adl\`ag }
\def\SMS{ Appendix}
\newtheorem{proposition}{Proposition}
\newtheorem{definition}[proposition]{Definition}
\newtheorem{lemma}[proposition]{Lemma}
\newtheorem{theorem}[proposition]{Theorem}
\title{Stochastic Models of Neural Synaptic Plasticity}
\date{\today}
\author[Ph. Robert]{Philippe Robert}
\email{Philippe.Robert@inria.fr}
\urladdr{http://www-rocq.inria.fr/who/Philippe.Robert}
\address[Ph.~Robert, G.~Vignoud]{INRIA Paris, 2 rue Simone Iff, 75589 Paris Cedex 12, France}
\author[G. Vignoud]{Ga\"etan Vignoud${}^1$}
\email{Gaetan.Vignoud@inria.fr}
\address[G. Vignoud]{ Center for Interdisciplinary Research in Biology (CIRB) - Coll\`ege de France (CNRS UMR 7241, INSERM U1050), 11 Place Marcelin Berthelot, 75005 Paris, France}
\thanks{${}^1$Supported by PhD grant of \'Ecole Normale Sup\'erieure, ENS-PSL}
\begin{document}

\begin{abstract}
In neuroscience, learning and memory are usually associated to long-term changes of neuronal connectivity.
In this context, {\em synaptic plasticity} refers to the set of mechanisms driving the dynamics of neuronal connections, called {\em synapses} and represented by a scalar value, the \emph{synaptic weight}.
Spike-Timing Dependent Plasticity (STDP) is a biologically-based model representing the time evolution of the synaptic weight as a functional of the past spiking activity of  adjacent neurons.

If numerous models of neuronal cells have been proposed in the mathematical literature, few of them include a variable for the time-varying strength of the connection.
A new, general, mathematical framework is introduced to study synaptic plasticity associated to different STDP rules.
The system composed of two neurons connected by a single synapse is investigated and a stochastic process describing its dynamical behavior is presented and analyzed.
The notion of {\em plasticity kernel} is introduced as a key component of plastic neural networks models, generalizing a notion used for pair-based models.
We show that a large number of STDP rules from neuroscience and physics can be represented by this formalism.
Several aspects of these models are discussed and compared to canonical models of computational neuroscience.
An important sub-class of plasticity kernels with a Markovian formulation is also defined and investigated.
In these models, the time evolution of cellular processes such as the neuronal membrane potential and the concentrations of chemical components created/suppressed by spiking activity  has the Markov property.
\end{abstract}

\maketitle

\vspace{-5mm}

\bigskip

\hrule

\vspace{-3mm}

\tableofcontents

\vspace{-1cm}

\hrule

\section{Introduction}
\label{sec:Introduction}
Central nervous systems, as the brain, are the main substrate for memory and learning, two essential concepts in the understanding of behavior.

It is widely accepted that neurons constitute the main relay for information in complex neural networks composing the brain.
This multi-scale system, ranging from single neuronal cells to complex brain areas, is known to be the basis of memory consolidation, i.e. the transformation of a temporary information into a long-lasting stable memory.
The memory trace, or \emph{engram}, is the focus of studies in neuroscience, see~\citet{tonegawa_role_2018} for example.
Biological, computational and mathematical models are developed to understand mechanisms by which an \emph{engram} emerges during learning, maintains itself, and evolves with time.

{\em Synapses} are the key components for the transmission of information between connected neurons, and accordingly,
it is assumed that the encoding of memory is integrated in the intensity of these connections.
From a biological point of view, a synapse is a structure, located at the junction of two neurons, where the transmission of chemical/electrical signals is possible.
A neuronal connection is unidirectional in the sense that the signal goes from an input neuron, called the {\em pre-synaptic neuron}, to the output one, {\em the post-synaptic neuron}.
The intensity of the connection is referred to as the {\em synaptic efficacy/strength} and is represented by a scalar variable, the {\em synaptic weight} $W$.
The impact of an input signal, a {\em spike}, from the pre-synaptic neuron is modeled as a jump of the {\em membrane potential} $X$ of the post-synaptic neuron.
The amplitude of this jump is used to quantify the synaptic weight.

A synaptic plasticity mechanism is defined as a collection of activity-dependent cellular processes that modifies the synaptic connectivity.
During learning, specific patterns of neural activity may elicit short, from milliseconds to seconds, and/or long, from minutes to hours, -term changes in the associated synaptic weights. In this context, it is conjectured that memory is directly associated to synaptic plasticity, see~\citet{takeuchi_synaptic_2014}.

\subsection{The State of a  Neuronal Cell}
\label{subsec:stateneuralcell}

In this paper we will investigate stochastic models of the dynamic of the synaptic weight of a connection from a pre-synaptic neuron to a post-synaptic neuron. 

The post-synaptic neuron is represented by its membrane potential $X$ which is a key parameter to describe its current activity. In neuroscience numerous models of an individual neuronal cell and neuronal networks have been used to investigate learning abilities and plasticity.
See~\citet{gerstner_neuronal_2014} for a review.

The {\em leaky-integrate-and-fire model}  describes the time evolution of the membrane potential as a resistor-capacitor circuit with a constant leaking mechanism. Due to different input currents, the membrane potential of a neuron may rise until it reaches some threshold after which a spike is emitted and transferred to the synapses of neighboring cells. A large class of neural models based on this hypothesis has been developed, see~\citet{gerstner_neuronal_2014} and references within.

To take into account the important fluctuations within cells, due to the spiking activity and thermal noise in particular, a random component in the cell dynamics has to be included in mathematical models describing the membrane potential evolution. For several models this  random component is represented as an independent additive diffusion component, like Brownian motion, of the membrane potential.

In our approach, the random component is  at the level of the generation of spikes.
When the value of the  membrane potential of the output neuron is at $X{=}x$, a spike occurs at rate $\beta(x)$ where $\beta$ is the {\em activation function}. See~\citet{chichilnisky_simple_2001} for a discussion.
In particular the instants when the output neuron spikes are represented by an inhomogeneous Poisson process.
Considering a constant synaptic weight $W$, the time evolution of the post-synaptic membrane potential $(X(t))$ is represented by the following stochastic differential equation (SDE):
\begin{equation}\label{IX0}
  \diff X(t) = {-}\frac{1}{\tau}X(t)\diff t+W\mathcal{N}_{\lambda}(\diff t)-g\left(X(t{-})\right)\mathcal{N}_{\beta, X}(\diff t),
\end{equation}
where $X(t{-})$ is the left limit of $X$ at $t{>}0$,
\begin{itemize}
\item  $\tau$ is the exponential decay time constant of the membrane potential associated to the leaking mechanism;
\item The sequence of firing instants of the pre-synaptic neuron  is represented by a Poisson point process $\mathcal{N}_{\lambda}$ on $\R_+$ with rate $\lambda$. At each pre-synaptic spike, the membrane potential $X$ is increased by the amount $W$;
\item The sequence of firing instants of the post-synaptic neuron is an inhomogeneous Poisson point process $\mathcal{N}_{\beta, X}$ on $\R_+$ whose rate function  is given by $(\beta(X(t{-})))$.
\item The drop of potential due to a post-synaptic spike is represented by the function $g$, i.e.
after a post-synaptic spike, the membrane potential is reset to $X(t-){-}g(X(t-))$.
\end{itemize}
Considering that the point process $\mathcal{N}_{\beta, X}$ depends on $(X(t))$, Relation~\eqref{IX0} can be seen as a fixed point equation.

\subsection{Synaptic plasticity}
\label{subsec:synapticplasticity}
Synaptic plasticity refers to different mechanisms that leads to the modification of the synaptic weight.
Consequently, we need to consider a time varying version of the synaptic strength $W(t)$.
Although synaptic plasticity is a complex mechanism, general principles have been inferred from experimental data and previous modeling studies.
One of the founding principles is Hebb's postulate~(1949), later on summarized by~\citet{shatz_developing_1992} as, ``{\em Cells that fire together wire together}''.

Synaptic {\em potentiation}, resp. {\em depression}, is associated to an increase, resp. a decrease, of the synaptic strength.
Plasticity is described as a set of mechanisms controlling the potentiation and the depression of synapses.
It usually depends on the pre-synaptic and post-synaptic signaling, i.e.~of past instants of pre-synaptic and post-synaptic spikes.
In the literature this class of mechanisms are referred to as {\em Spike-Timing Dependent Plasticity} (STDP).
Several experimental protocols have been developed to elicit STDP at synapses: sequences of spikes pairing from either side of a specific synapse are presented, at a certain frequency and with a certain delay.
Occurrence, magnitude and polarity of STDP have been shown to depend on protocols used in experiments: frequency, number of pairings, types of synapses where it is applied, the neuronal sub-population, brain area, just to cite a few key parameters, see~\citet{feldman_spike-timing_2012}.

We now introduce two important classes of synaptic plasticity mechanisms.
Most models of the literature belong to, or are a variation of, one of these two classes.
\begin{enumerate}
\item {\textsc{Pair-based models.} }\\
  Each pair $t{=}(t_{\rm pre},t_{\rm post})$ of instants of pre-synaptic and post-synaptic spikes is associated to an increment $\Delta W$ of the synaptic weight at time $\max(t_{\rm pre},t_{\rm post})$,
\begin{equation}\label{IDeltaWP}
\Delta W = \Phi(\Delta t),
\end{equation}
where $\Delta t{\steq{def}}t_{\rm post}{-}t_{\rm pre}$ and $\Phi$ is the some function on $\R$, the {\em STDP curve}.
The function $\Phi$, usually taken from experimental data, is sharply decreasing to $0$ as $\Delta t$ goes to infinity, so that distant spikes have a negligible contribution.
\noindent

Many variants and extensions of pair-based models have been developed over the years to fit with experimental results.
Triplets-rules, described in Sections~\ref{secsecsec:STDPSup} and ~\ref{secsecsec:Triplet}, add a dependency between spikes of the same neuronal cell.
Additional examples can be found in~\citet{babadi_stability_2016}.

\medskip

\item {\textsc{Calcium-based models.} }\\
Another class of models infers from explicit biological mechanisms the shape of the STDP curve.
Post-synaptic calcium traces have been found experimentally to be critical in the establishment of plasticity, see~\citet{feldman_spike-timing_2012} and references therein.
In a classical model, when the calcium concentration $C_{\rm ca}$ in the post-synaptic neuron reaches some specific threshold, STDP is induced accordingly.
The analogue of Relation~\eqref{IDeltaWP} for calcium-based STDP rules is,
\begin{equation}\label{IDeltaWC}
\diff W(t)  = F(C_{\rm ca}(t))\diff t,
\end{equation}
for some function $F$.
The dynamics of $C_{\rm ca}$ is only driven by instants of pre- and post-synaptic spikes.
Consequently, the dependence of plasticity on the instants of spikes is not expressed directly as in pair-based models, but through some intermediate biological variable.
Several biophysical models are based on this calcium hypothesis, see~\citet{graupner_mechanisms_2010} for a review.
\end{enumerate}

It should be noted that there are other  STDP models, such as the ones based on exponential filtered traces of the membrane potential, see~\citet{clopath_voltage_2010}. Pair-based and calcium-based models are nevertheless the most widely used STDP rules in  large-scale plastic neural networks.

\subsection{Models of Plasticity in the Literature}
\label{subsec:modelsofplasticityliterature}
To understand how synaptic plasticity may shape the brain, the study of STDP in neural networks has attracted a lot of interest in different domains:
\begin{enumerate}
\item {\em Experiments},  with measurements of a large variety of STDP rules;
\item {\em Computational models}, for numerical simulations of these protocols with several populations of neuronal cells;
\item {\em Mathematical models}, to investigate the qualitative properties of STDP rules.
\end{enumerate}

Many computational models have been developed to investigate STDP rules in different contexts.
See~\citet{kempter_hebbian_1999} and~\citet{morrison_spike-timing-dependent_2007} and the references therein.

Mathematical studies of models of plasticity are quite scarce. Most models are centered on evolution equations of neural networks with a fixed synaptic weight. See Sections~1 and~2 of~\citet{robert_dynamics_2016} for a review.  \citet{helson_new_2018} investigates  a Markovian model of a  Nearest Neighbor Symmetric Model  STDP rule. See Section~\ref{secsecsec:pairbased}. This is one of the few stochastic analyses in this domain.

\subsection{Contributions}
\label{subsec:contributions}
A mathematical model of plasticity describing a pre- and a post-synaptic neuron should include the spiking mechanisms of the two neuronal cells. It is given by the time evolution of the membrane potential~$X$ of the post-synaptic cell, as described by Equation~\eqref{IX0}. It must also include the dynamics of plasticity of the type~\eqref{IDeltaWP} or~\eqref{IDeltaWC} for the time evolution of the synaptic weight $W$. 

The difficulty lies in the complex dependence of the evolution of $W$  with respect to the instants of spikes of both cells, the processes ${\cal N}_\lambda$ and ${\cal N}_{\beta,X}$ of Equation~\eqref{IX0}. For pair-based models for example, this is a functional of all pairs of instants of both processes. In general, there does not exist a simple Markovian model to describe the membrane potential dynamics and the evolution of the synaptic weight. 

In Section~\ref{sec:modelneuralplasticicty}, we introduce the notion of {\em plasticity kernel} which describes in a general way how the spiking activity is taken into account in the synaptic weight dynamics as a functional of the point processes ${\cal N}_\lambda$ and ${\cal N}_{\beta,X}$.
A differential system associated to the dynamics of the variables~$X$ and $W$ is presented.
Under mild conditions, it is proved that it has a unique solution for a given initial state.
It is, to the best of our knowledge, the first attempt to have a general mathematical framework that describes most STDP rules of the literature. A large set of examples is presented in Section~\ref{secsec:plasticitykernel} and Section~\ref{AsecMod}: most STDP models of~\citet{morrison_phenomenological_2008,graupner_mechanisms_2010,clopath_connectivity_2010,babadi_stability_2016} can be represented within this formalism. Section~\ref{ap:figures}  gives a graphical representation of several STDP rules, see Figures~\ref{figS:pairbased} and~\ref{figS:calciumbased}.

Section~\ref{sec:markov} is devoted to an important sub-class of STDP rules, {\em plasticity kernels of class ${\cal M}$}.  These kernels have a representation in terms of a finite dimensional process whose coordinates can be interpreted as concentrations of chemical components created/suppressed by spiking activity. If a classical Markovian analysis of the associated stochastic processes is not really possible, their main advantage is that one can formulate a tractable model with two timescales, when the cellular dynamics are ``fast''. This approach is developed in the follow-up paper~\citet{robert_stochastic_2020_2}. For these models, when the synaptic weight is fixed, the fast stochastic processes have the Markov property. Section~\ref{sec:Cell} discusses these aspects and several examples are presented in Section~\ref{ap:generator}.
Finally in Section~\ref{ap:stochqueu}, a discrete formulation of the stochastic system is defined and its fast processes invariant distribution is analyzed. The case of a calcium-based model is analyzed. 
Section~\ref{apap:noexp}  discusses modeling issues on the incorporation of plasticity:  via a time-smoothing kernel, as we do in the paper, or directly with an instantaneous information.

\subsection{STDP in recurrent neural networks}
In this paper, we consider only two neurons (the pre-synaptic neuron and the post-synaptic) that are connected by a single synapse.
As it will be seen, a large variety of models have been used in the literature to describe the time evolution of a synaptic weight.
Our goal is to propose a general, basic, mathematical framework where most of these models of plasticity can be investigated.
The dynamics of the synaptic weight $(W(t))$ depends in an intricate way on the point process ${\cal N}_\lambda$ for pre-synaptic spikes and ${\cal N}_{\beta,X}$ for post-synaptic spikes.

For a neural network whose nodes are the vertices of a graph ${\cal G}$, an extension of this model would be as follows: the membrane potential process $(X_i(t))$ of node $i{\in}{\cal G}$ should satisfy the SDE,
\[
\diff X_i(t) {=} \displaystyle {-}\frac{1}{\tau}X_i(t)\diff t{+}\sum_{j{\to}i}W_{j, i}(t{-})\mathcal{N}_{\beta, X_j}(\diff t){-}g(X_i(t-))\mathcal{N}_{\beta,X_i}(\diff t),
\]
where $j{\to}i$ indicates that there is a synapse  $(j,i)$, from node $j$ to  node $i$, and $(W_{j,i}(t))$ is the corresponding process for the synaptic weight. The associated differential quantity $W(t{-}){\cal N}_\lambda(\diff t)$ for  instants of pre-synaptic of the synapse $(j,i)$ is  given by
\[
W_{j, i}(t{-})\mathcal{N}_{\beta, X_j}(\diff t).
\]
Each synaptic weight $(W_{j, i}(t))$ will be subject to synaptic plasticity, with defined plasticity kernels $\Gamma_{p,i}$ and $\Gamma_{d,i}$ that can be different.
For synaptic weight $(W_{j, i}(t))$, we will define ${\cal N}_{\beta,X_j}$ as the Poisson process representing the pre-synaptic neuron and similarly, ${\cal N}_{\beta,X_i}$ for the post-synaptic neuron.

The models and some results of our paper can be extended to the multidimensional case, in particular the existence and uniqueness result,  Theorem~\ref{th:ExistTheo}.
For simplicity and because of its importance as a generic model, we will restrict ourselves to the case of a network with two-nodes.

\section{Models of Neural Plasticity}
\label{sec:modelneuralplasticicty}
We consider two neurons connected by one {\em synapse}. A synapse is a unidirectional connection from the {\em input neuron} to the {\em output neuron} allowing the transmission of `information'. When the input, or {\em pre-synaptic},  neuron  spikes, some neurotransmitters are released at the level of the synapse, where they can interact with the output, or {\em post-synaptic}, neuron.
Following synaptic transmission, a pre-synaptic spike increments the {\em membrane potential} $X$ of the output neuron by a scalar value, the synaptic weight~$W$.

The dynamics of neural plasticity is described in terms of the time evolution of $(X(t))$ and $(W(t))$.
For $t{\ge}0$,
\begin{enumerate}
\item $X(t){\in}\R$ is the {\em membrane potential} of the output neuron at time $t$.
 This is the difference between the internal and the external electric potentials of the neuron.
 The dynamics of the process $(X(t))$ associated to the output neuron is a classical model of neuroscience. See~\citet{gerstner_neuronal_2014} for a survey.
\item $W(t){\in}\R$ represents the intensity of {\em synaptic transmission} at time $t$, i.e. the increment of the post-synaptic membrane potential $X$ when the input neuron spikes at time $t$.
The evolution of $(W(t))$ at time $t{>}0$ depends in general on the total sample path of $((X(s),W(s)), 0{\leq}s{\leq}t)$, in an intricate way.
\end{enumerate}
To take into account inhibitory mechanisms, these two variables are real-valued and, consequently, may have negative values.
Real synapses have a constant sign: they can be either excitatory (with a non-negative synaptic weight) or inhibitory (with a non-positive synaptic weight).
In the following sections, other variables will be added to formalize the evolution equations of $(X(t),W(t))$.

\subsection{Definitions and Notations}
\label{secsec:definitionnotations}
Sequences of pre- and post-synaptic spikes play an important role in the study of {\em spike-timing dependent} plasticity.
Mathematically, it is convenient to describe them in terms of {\em point processes}.
See~\citet{dawson_measure-valued_1993} for  general definitions and results on point processes.

We denote by ${\cal M}_+(\R_+^d)$  the set  of  positive Radon measures on $\R_+^d$, i.e.~with finite values on any compact subset of $\R_+^d$.
A point measure on $\R_+^d$, $d{\ge}1$, is an integer-valued Borelian positive measure on $\R_+^d$ which is Radon.
A point measure is carried by a subset of $\R_+^d$ which is at most countable and without any finite limiting point.
The set of point measures on $\R_+^d$ is denoted by ${\cal M}_p(\R_+^d){\subset}{\cal M}_+(\R_+^d)$, it is endowed with the natural weak topology of ${\cal M}_+(\R_+^d)$ and its corresponding Borelian $\sigma${-}field.

If $m{\in}{\cal M}_p(\R_+^d)$ and $A{\in}{\cal B}(\R_+^d)$ is a Borelian subset of $\R_+^d$, then $m(A)$ denotes the number of points of $m$ in $A$, i.e.
\[
m(A)=\int_{\R_+^d}\mathbbm{1}_{A}(x)\,m(\diff x).
\]
A point process on $\R_+^d$ is a probability distribution on ${\cal M}_p(\R_+^d)$.
Two independent Poisson point processes are assumed to be defined on a filtered probability space $(\Omega,{\cal F},({\cal F}_t), \P)$, see~\citet{kingman_poisson_1992},
\begin{enumerate}
\item A point process ${\cal N}_\lambda$ on $\R_+$ to represent the instants of pre-synaptic spikes is assumed to be Poisson  with rate $\lambda{>}0$, $(t_{{\rm pre},n})$ is the increasing sequence of its jumps, i.e.\ \[
{\cal N}_\lambda{=}\sum_{n{\ge}1} \delta_{t_{{\rm pre},n}}, \text{ with  } 0{\le}t_{{\rm pre},1}{\le}t_{{\rm pre},2}\le\cdots \le t_{{\rm pre},n}\le\cdots,
\]
where $\delta_a$ is the Dirac measure at $a{\in}\R_+$;
\item  A Poisson point process ${\cal P}$ on $\R_+^2$ with rate  $1$. It is used to define the inhomogeneous point process of post-synaptic spikes in Relation~\eqref{Nbeta}. 
\end{enumerate}

The variable $t$ of the point processes ${\cal N}_\lambda(\diff t)$ and  ${\cal P}(\diff x,\diff t)$ is interpreted as the time variable.
For $t{\ge}0$, the $\sigma$-field ${\cal F}_t$ of the filtration $({\cal F}_t)$ of  the probability space is assumed to contain all events before time $t$ for both point processes, i.e.\
\begin{equation}\label{eq:ft}
  \sigma\left< \rule{0mm}{4mm}\mathcal{P}_1\left(\rule{0mm}{3mm}A{\times}(s,t]\right), \mathcal{P}_2\left(\rule{0mm}{3mm} A{\times}(s,t]\right), A {\in}\mathcal{B}\left( \mathbb{R}_+ \right), s{\leq}t \right>\subset {\cal F}_t.
\end{equation}

A stochastic process $(U(t))$ is {\em adapted} if, for all $t{\ge}0$,  $U(t)$ is ${\cal F}_t$-measurable.
It is a {\em  \cadlag process} if, almost surely, it is right continuous and has a left limit at every point $t{>}0$, $U(t{-})$ denotes the left limit of $(U(t))$ at $t$. The Skorokhod space of \cadlag functions from $[0,T]$ to $\mathcal{S}$ is $\mathcal{D}([0,T],\mathcal{S})$. See~\citet{billingsley_convergence_1999}.

The set of real continuous bounded functions on the metric space $\mathcal{S}{\subset}\R^d$  is denoted by $\mathcal{C}_b(\mathcal{S})$, and $\mathcal{C}_b^k(\mathcal{S}){\subset}\mathcal{C}_b(\mathcal{S})$ is the set of bounded, $k$-differentiable functions on $\mathcal{S}$ with respect to each coordinate, with the respective derivatives bounded and continuous.

We conclude this preliminary section with an elementary but important lemma concerning the {\em filtering} of a stochastic process with an exponential function.
\begin{lemma}[Exponential Filtering]\label{FiltLem}
  If $\mu$ is a non-negative Radon measure on $\R_+$, $\alpha{>}0$ and $h_0{\in}\R$, then
  \[
  H(t)=h_0 e^{-\alpha t}{+}\int_{(0,t]} e^{-\alpha(t-s)}\mu(\diff s)
    \]
    is the unique \cadlag solution  of the differential equation,
\[    \diff H(t){=} {-}\alpha H(t)\diff t {+}\mu(\diff t),
\]
   such that $H(0){=}h_0$. 

\end{lemma}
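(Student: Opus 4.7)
The plan is to split the argument into existence and uniqueness. For existence, I verify that the proposed formula defines a càdlàg function satisfying the claimed differential equation; for uniqueness, I show the difference of two solutions is forced to vanish by an integrating factor argument.

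For existence, I would introduce $K(t){=}e^{\alpha t}H(t){=}h_0{+}\int_{(0,t]}e^{\alpha s}\mu(\diff s)$. Since $\mu$ is Radon, $K$ is well-defined, finite on every bounded interval, and càdlàg with $\diff K(t){=}e^{\alpha t}\mu(\diff t)$ and jumps exactly at the atoms of $\mu$. Then $H(t){=}e^{-\alpha t}K(t)$ is the product of a $\mathcal{C}^1$ function with a càdlàg function of locally bounded variation, so the Stieltjes product rule applies without a quadratic covariation term (the continuous factor kills it), giving
\[
\diff H(t) = -\alpha e^{-\alpha t}K(t)\diff t + e^{-\alpha t}\diff K(t) = -\alpha H(t)\diff t + \mu(\diff t),
\]
and $H(0){=}h_0$ is immediate. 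So the proposed $H$ is a càdlàg solution.

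For uniqueness, suppose $H_1$ and $H_2$ are two càdlàg solutions with $H_i(0){=}h_0$. Their difference $D{=}H_1{-}H_2$ is càdlàg, vanishes at $0$, and the driving measure $\mu$ cancels, leaving the integral equation $D(t){=}{-}\alpha\int_0^t D(s)\diff s$. This forces $D$ to be absolutely continuous, hence in particular continuous, and differentiable a.e. with $D'(t){=}{-}\alpha D(t)$. Then $(e^{\alpha t}D(t))'{=}0$ a.e., so by absolute continuity $e^{\alpha t}D(t){\equiv}e^{0}D(0){=}0$, whence $D{\equiv}0$. Alternatively one can apply Gronwall directly to $|D|$.

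The only delicate point is the Stieltjes calculus at the atoms of $\mu$: one has to check that $H$ as defined is right-continuous with left limits and that $H(t){-}H(t-){=}\mu(\{t\})$, so that the jump part of $\diff H$ matches $\mu(\diff t)$ pointwise; then the product rule for $e^{-\alpha t}K(t)$ distributes without any extra jump correction because $e^{-\alpha t}$ is continuous. Everything else is a routine Fubini/dominated convergence argument using local finiteness of $\mu$.
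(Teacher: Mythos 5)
Your proof is correct. The paper states Lemma~1 without any proof, treating it as elementary, and your argument is exactly the standard one the authors implicitly invoke: the integrating-factor substitution $K(t){=}e^{\alpha t}H(t)$ with the Stieltjes product rule (no covariation term since $e^{-\alpha t}$ is continuous) gives existence, the cancellation of $\mu$ in the difference of two solutions reduces uniqueness to the homogeneous linear ODE handled by absolute continuity or Gronwall, and the jump verification $H(t){-}H(t{-})=\mu(\{t\})$ that you flag is indeed the only point where c\`adl\`ag care is genuinely needed.
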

This type of process is a central object in mathematical models of neuroscience.
It is used to represent {\em leaky-integrate phenomena} of chemical components within cells.
See~~\citet{gerstner_neuronal_2014} for a general review.

\subsection{The Dynamics of the Post-synaptic Membrane Potential}
\label{secsec:TimeEvolMP}

It is represented as a \cadlag stochastic process $(X(t))$ following leaky-integrate dynamics illustrated in Figure~\ref{figsub:model}:
\begin{enumerate}
\item It decays exponentially to $0$ with a fixed characteristic decay time $\tau$, set without loss of generality to $\tau{=}1$.
\item It is incremented by the current synaptic weight variable  at each firing instant of the input neuron, i.e. at each instant of the Poisson point process~${\cal N}_\lambda$.
\item The firing mechanism of the output neuron is driven by a function $\beta$ from $\R$ to $\R_+$, the activation function.
When the membrane potential is $x$, the output neuron fires at rate $\beta(x)$.
This function is usually assumed to be non-decreasing, in other words, the larger the membrane potential is, the more likely the neuron is to spike.

\item After a post-synaptic spike, the neuronal membrane potential $X$ is decreased by the amount $g
(x)$, where $g$ is some function on
$\R$.
In general, the membrane potential is reset to $0$ after a spike, i.e. $g(x){=}x$,  see~\citet{robert_dynamics_2016}. However,  in some cases,  the reset potential may not depend on the membrane potential before the spike, $g$ can be  constant for example.
\end{enumerate}

\noindent
{\sc Post-synaptic spikes}.
If the instants of pre-synaptic spikes are represented by the Poisson process ${\cal N}_\lambda$,
the firing instants of the output neuron $t_{post,n}$ are expressed as the jumps of the point process ${\cal N}_{\beta,X}$ on $\R_+$ defined by
\begin{multline}\label{Nbeta}
  \int_{\R_+}f(u){\cal N}_{\beta,X}(\diff u)\steq{def}\int_{\R_+}f(u){\cal P}\left(\rule{0mm}{4mm}\left(\rule{0mm}{3mm}0,\beta(X(u-))\right],\diff u\right)\\
=  \int_{\R_+^2}f(u)\ind{s{\in}(0,\beta(X(u-))]}{\cal P}(\diff s,\diff u),
\end{multline}
for any non-negative Borelian function $f$ on $\R_+$.

Classical properties of Poisson processes give that, for $t{>}0$ and  $x{\in}\R$,
\[
\P\left.\left(\rule{0mm}{5mm}{\cal N}_{\beta,X}(t,t{+}\diff t){\neq} 0 \right| X(t-){=}x\right){=}\beta(x)\diff t{+}o(\diff t),
\]
as expected, ${\cal N}_{\beta,X}$ is Poisson process with intensity $(\beta(X(t)))$.

\medskip
The following stochastic differential equation summarizes the description of the time evolution of $(X(t))$  given by a), b), c) and~d),
\[
  \diff X(t) = -X(t)\diff t+W(t{-})\mathcal{N}_\lambda(\diff t)-g(X(t-))\mathcal{N}_{\beta,X}(\diff t).
\]

\subsection{Time Evolution of the  Synaptic Weight}
\label{secsec:timeevolutionweight}

In this work, the synaptic weight $W$ will stay in a defined real (not necessarily bounded) interval $K_W$.
For several examples, the plasticity process leads to  dynamics for which the process $(W(t))$ stays in $K_W$ for all time $t$.
\begin{itemize}
\item Taking $K_W{=}\R$ leads to free dynamics of the synaptic weight, that can be either negative or positive, change
 its sign because of the plasticity rules. This situation occurs in models of neural networks  where  excitatory/inhibitory neurons are not separated in distinct classes..
 \item If $K_W{=}\R_+$, the synaptic weight is non-negative and plasticity processes cannot change its sign. This is a model for excitatory
 neurons whose spikes lead to the increase of the post-synaptic membrane potential.
 \item Conversely, if $K_W{=}\R_-$, the cell is an inhibitory neuron, which has the opposite effect on the  post-synaptic membrane potential.
 \item Finally, $K_W$ can also be bounded in order to represent saturation mechanisms, i.e. the synaptic weights needs to stay in a biological range of value.
   In that case, potentiation refers to a diminution of the amplitude of the negative jump, whereas depression indicates an augmentation. In experimental works, the denominations are inverted, for the sake of clarity we chose to stay with the previous names.
\end{itemize}

We can now introduce the notion of  {\em plasticity kernels}.
\begin{definition}[Plasticity Kernel]
\label{def:gamma}
A plasticity kernel is a measurable function
\[
    \Gamma{:}\ {\cal M}_p(\R_+)^2\longrightarrow {\cal M}_+(\R_+), \quad
    (m_1,m_2) \longrightarrow \Gamma(m_1,m_2),
\]
${\cal M}_+(\R_+)$ is  the set  of  positive Radon measures on $\R_+$ and, for any $t{>}0$, the functional
\begin{equation}\label{Consist}
 (m_1,m_2)\longrightarrow  \Gamma(m_1,m_2)(\diff u \cap[0,t])
\end{equation}
is ${\cal G}_t{\otimes}{\cal G}_t$-measurable,  where $\mu(\diff u{\cap}[0,t])$ denotes the restriction of the Radon measure $\mu$ to the interval $[0,t]$ and $({\cal G}_t)$ is the filtration  on ${\cal M}_p(\R_+)$, such that for $t{\ge}0$,
${\cal G}_t$ is the $\sigma$-field generated by the functionals $m{\to}m((0,s])$, with $s{\le}t$.
\end{definition}
If $\Gamma$ is a plasticity kernel and $m_1$, $m_2{\in}{\cal M}_p(\R_+)$, the measure $\Gamma(m_1,m_2)(\diff u{\cap}[0,t])$ depends only on the variables  $m_i([0,s])$, for $i{\in}\{1,2\}$ and $s{\le}t$.

In our model, the infinitesimal elements at time $t$ for the update of plasticity are expressed as
$\Gamma({\cal N}_\lambda,{\cal N}_{\beta,X})(\diff t)$ for some plasticity kernel $\Gamma$.
This  quantifies how the interaction between the instants of pre-synaptic and of post-synaptic spikes, ${\cal
N}_\lambda$ and ${\cal N}_{\beta,X}$  leads to specific synaptic changes.
For example, the order and timing between instants of pre- and post-synaptic spikes may have an impact on plasticity.

In previous works~\citet{10.3389/fncom.2010.00019,Fremaux13326,10.1371/journal.pone.0101109,feldman_spike-timing_2012}, the notion of STDP Temporal Kernels referred to the curve of synaptic weight change $\Delta W$ as a
function of $\Delta t$ for pair-based models. \citet{pfister_triplets_2006} introduced more complex kernels, with multi-spikes interactions.
The plasticity kernels defined above extend this notion to more general interactions between pre- and post-synaptic spikes.

Plasticity is represented as a process, integrating, with some decay, the past interactions of the spiking activity on either side of the synapse.
Two non-negative process are introduced:  $(\Omega_p(t))$ and $(\Omega_d(t))$,  the first one is associated to
potentiation (increase of $W$) and the other  to depression (decrease of $W$). For $a{\in}\{p,d\}$,
\begin{equation} \label{eq:Omega}
\Omega_a(t) = \Omega_a(0) e^{-\alpha t}+\int_{(0,t]} e^{-\alpha(t-s)} \Gamma_a({\cal N}_\lambda,{\cal N}_{\beta,X})(\diff s),
\end{equation}
where $\alpha{>}0$ and  the variables $\Gamma_p$ and  $\Gamma_d$ are plasticity kernels associated to potentiation and depression respectively.  The process $(\Omega_a(t))$ can be seen as a exponential filtering of the random measure $\Gamma_a({\cal N}_\lambda,{\cal N}_{\beta,X})(\diff t)$ in the sense of Lemma~\ref{FiltLem}.
In Section~\ref{apap:noexp} of \SMS, another stochastic model of plasticity  with no exponential filtering of the plasticity kernels is introduced and discussed.

As explained in the introduction of this section, the function $M$ need to be chosen so that the synaptic weight $W$
stays at all time in its definition interval $K_W$.
The time evolution of $(W(t))$ depends then on the past activity of the input and output neurons, through $(\Omega_p(t))$ and $(\Omega_d(t))$ and is described by,
\begin{equation} \label{eq:M0}
\frac{\diff W(t)}{\diff t} = M\left(\Omega_p(t),\Omega_d(t),W(t)\right),
\end{equation}
with, $M$ verifying, for any piecewise-continuous \cadlag functions $(\omega_p(t))$ and $(\omega_d(t))$ on $\R_+$, a solution $(w(t))$ of the ODE
\[
    \frac{\diff w(t)}{\diff t}=M(\omega_p(t),\omega_d(t),w(t)),
\]
with $w(0){\in}K_W$, is such that $w(t){\in}K_W$,  for all $t{\geq}0$.

We now give some examples of functions $M$ associated to different synaptic domains $K_W$.
For $K_W{=}\R$, we can chose the additive implementation of STDP rules, where,
\begin{equation} \label{eq:M1}
  M(\omega_p,\omega_d,w)\steq{def}M(\omega_p, \omega_d)=\omega_p-\omega_d
\end{equation}
In that case, the dynamics are unbounded and we see the update only depends on the potentiation/depression plasticity
variables $\Omega_a$.

If we want to model bounded synaptic weight in $K_W{=}[A_d,A_p]$, we can consider the function $M$ given by
\begin{equation} \label{eq:M2}
 M(\omega_p,\omega_d,w)\steq{def}
 (A_p{-}w)^{n} \omega_p{-}(w{-}A_d)^{n} \omega_d - \mu (w-A_r), \quad w{\in}[A_d,A_p],
\end{equation}
where  $A_d{\le}A_r{\le}{A_p}$, and $n{>}0$.
This corresponds to a multiplicative influence of
$W$. See~\citet{gutig_learning_2003}.
It is straightforward to see that in that case, the synaptic weight stays bounded between $A_p$ and $A_d$ for any
plasticity processes $\Omega_a$.
The expression ${-}\mu (W(t){-}A_r)$ is for the exponential decay of the synaptic weight $W$ to $A_r$, its resting
value.
This term represents {\em homeostatic mechanisms}, i.e.\ mechanisms that maintain steady internal physical and
chemical conditions to allow the functioning of the system. See~\citet{turrigiano_homeostatic_2004}.

Finally, an unbounded dynamics for an excitatory synapse, with $K_W{=}\R_+$ can be enforced by,
\begin{equation} \label{eq:M3}
 M(\omega_p,\omega_d,w)\steq{def}
 \omega_p{-}w\omega_d,
\end{equation}

\subsection{Examples of Plasticity Kernels}\label{secsec:plasticitykernel}
We show that several important STDP rules of the literature can be
expressed with plasticity kernels $\Gamma_p$ and $\Gamma_d$. Further extensions are presented in~\ref{secsecsec:STDPSup} and Section~\ref{secsecsec:Triplet}
\subsubsection{\sc Pair-Based Models}
\label{secsecsec:pairbased}

For pair-based mechanisms, the synaptic weight is modulated according to the respective timing of pre-synaptic and
post-synaptic spikes, as illustrated in Figure~\ref{figsub:pair}.
This follows the fact that most STDP experimental studies are based on pairing protocols, where pre- and
post-synaptic spikes are repeated at a certain frequency for a given number of pairings.

Accordingly, a large class of models have been developed on the principle that the synaptic weight change due to a pair $(t_{\rm pre},t_{\rm post})$ of instants of pre- and post-synaptic spikes,  depends only  on $\Delta t{=}t_{\rm post}{-}t_{\rm pre}$.
The synaptic update is then taken proportional to $\Phi(\Delta t)$, where $\Phi$ is some function converging to $0$ at infinity, that is referred to as the {\em STDP curve}.
An example of exponential STDP curves is given in Figure~\ref{figsub:pair}~(top left).
Many pair-based models have been developed over the years, varying mainly which pairs of spikes are taken into account when updating the synaptic weight.

We  start with the simplest rule, the \emph{all-to-all} version (following~\citet{morrison_phenomenological_2008}
terminology), where all pairs of spikes give an update of the synaptic weight.

\subsection*{All-to-all Model}
The \emph{all-to-all} scheme consists in updating the synaptic weight at each post-synaptic spike, occurring at time $t$ by the sum over all previous pre-synaptic spikes occurring at time $s{<}t$ of the quantity $\Phi(t{-}s)$.
Switching the role of pre- and post-synaptic spikes, the synaptic weight is updated  in the same way with other constants.
See Figure~\ref{figsub:pair}~(bottom left) for an example of \emph{all-to-all} interactions.

The plasticity kernels are defined by, for $m_1$, $m_2{\in}{\cal M}_p(\R_+)$ and $a{\in}\{p,d\}$,
\begin{multline}    \label{eq:FPa}
\Gamma_{a}^{\textup{PA}}(m_1,m_2)(\diff t) \steq{def}  \left(\int_{(0,t)}\Phi_{a,2}(t{-}s)m_2(\diff s)\right)\,m_1(\diff t)\\{+}
  \left(\int_{(0,t)}\Phi_{a,1}(t{-}s)m_1(\diff s)\right)\,m_2(\diff t), \quad a{\in}\{p,d\}.
\end{multline}
The functions $\Phi_{a,i}$, $a{\in}\{p,d\}$ and $i{\in}\{1,2\}$ are non-negative and non-increasing functions functions converging to $0$ at infinity.

If $f$ is a non-negative Borelian function on $\R_+$, we  have
\begin{multline*}
  \int_{\R_+} f(t)\Gamma_a^{\textup{PA}}({\cal N}_\lambda,{\cal N}_{\beta,X})(\diff t)\\ =\sum_{t_{\text{pre}}}f(u)\sum_{t_{\text{post}}{<}t_{\text{pre}}} \Phi_{a,2}(t_{\text{post}}{-}t_{\text{pre}})
       {+}\sum_{t_{\text{post}}} f(u)\sum_{t_{\text{pre}}<t_{\text{post}}} \Phi_{a,1}(t_{\text{pre}}{-}t_{\text{post}}).
\end{multline*}
\bigskip

\noindent
{\bf Remarks.}
\begin{enumerate}
\item  The  exponential STDP functions $\Phi(s){=}B\exp({-}\gamma s)$, $s{\ge}0$, are often used in this context. See~\citet{morrison_phenomenological_2008}.  Several studies also consider the case when $\Phi$ is a translated exponential kernel. See~\citet{lubenov_decoupling_2008}.

\item {\em Hebbian STDP} plasticity is said to occur when
\begin{itemize}
\item a  pre-post pairing, i.e.~ $t_{\rm pre}{<}t_{\rm post}$  leads to potentiation,  $\Delta W{>}0$;
\item  a post-pre pairing, $t_{\rm post}{<}t_{\rm pre}$,  leads to depression,  $\Delta W{<}0$.
\end{itemize}
Experiments have shown that this type of plasticity occurs for several populations of neuronal cells~\citet{bi_synaptic_1998}.
Early models can be found in~\citet{van_rossum_stable_2000,rubin_equilibrium_2001,morrison_phenomenological_2008} for a review.

Following Hebb's postulate, a `causal' pre-post pairing (a post-synaptic spike occurs after a pre-synaptic one) should lead to potentiation,
\[
\Gamma_{p}^{\textup{PAH}}(m_1,m_2)(\diff t) = \left(\int_{(0,t)}\Phi_{p,1}(t{-}s)m_1(\diff s)\right)\,m_2(\diff t).
\]
Conversely, a \emph{post-pre} pairing (anti-causal activation) leads to depression,
\[
\Gamma_{d}^{\textup{PAH}}(m_1,m_2)(\diff t) = \left(\int_{(0,t)}\Phi_{d,2}(t{-}s)m_2(\diff s)\right)\,m_1(\diff t).
\]
This corresponds to $\Phi_{p,2}{=}0$ and $\Phi_{d,1}{=}0$ in Equation~\eqref{eq:FPa}.

\item Other forms of STDP have been discovered experimentally see~\citet{feldman_spike-timing_2012}. {\em Anti-Hebbian STDP} models follows the opposite principles:  Pre-post pairings lead to depression, and  post-pre pairings lead to potentiation.
It has also been observed experimentally in the striatum, see~\citet{fino_bidirectional_2005} for example.

It corresponds to the case where  $\Phi_{p,1}{=}0$ and $\Phi_{d,2}{=}0$, and symmetric LTD rules  to $\Phi_{p,1}{=}\Phi_{p,2}{=}0$ and, finally,  symmetric LTP by $\Phi_{d,1}{=}\Phi_{d,2}{=}0$.   This is the motivation of the general setting defined in Equation~\eqref{eq:FPa}.

\item Pre/post-synaptic-only plasticity rules can also be expressed into this formalism.  These models include a  component to express the direct influence of the pre- or post-synaptic spikes on the plasticity without any interaction between the two spike trains. In that case, the kernel $\Gamma_a^{\textup{PA}_1}$ would have the following expression, for $m_1$, $m_2{\in}{\cal M}_p(\R_+)$ and $a{\in}\{p,d\}$,
\begin{multline}    \label{eq:FPa_prepost}
\Gamma_{a}^{\textup{PA}_1}(m_1,m_2)(\diff t) \steq{def}  \left(\int_{(0,t)}\Phi_{a,2}(t{-}s)m_2(\diff s)\right)\,m_1(\diff t)\\{+}
  \left(\int_{(0,t)}\Phi_{a,1}(t{-}s)m_1(\diff s)\right)\,m_2(\diff t) + D_{a,1}\,m_1(\diff t)+D_{a,2}\,m_2(\diff t),
\end{multline}
where the constants $D_{a,i}$, $a{\in}\{p,d\}$, $i{\in}\{1,2\}$, are non-negative.
\end{enumerate}

\begin{figure}[ht!]
\begin{subfigure}{\textwidth}
\fontsize{8pt}{8pt}\selectfont
\makebox[\textwidth][c]{
\def\svgwidth{0.55\textwidth}
\begingroup%
  \makeatletter%
  \providecommand\color[2][]{%
    \errmessage{(Inkscape) Color is used for the text in Inkscape, but the package 'color.sty' is not loaded}%
    \renewcommand\color[2][]{}%
  }%
  \providecommand\transparent[1]{%
    \errmessage{(Inkscape) Transparency is used (non-zero) for the text in Inkscape, but the package 'transparent.sty' is not loaded}%
    \renewcommand\transparent[1]{}%
  }%
  \providecommand\rotatebox[2]{#2}%
  \newcommand*\fsize{\dimexpr\f@size pt\relax}%
  \newcommand*\lineheight[1]{\fontsize{\fsize}{#1\fsize}\selectfont}%
  \ifx\svgwidth\undefined%
    \setlength{\unitlength}{184.75795316bp}%
    \ifx\svgscale\undefined%
      \relax%
    \else%
      \setlength{\unitlength}{\unitlength * \real{\svgscale}}%
    \fi%
  \else%
    \setlength{\unitlength}{\svgwidth}%
  \fi%
  \global\let\svgwidth\undefined%
  \global\let\svgscale\undefined%
  \makeatother%
  \begin{picture}(1,0.28923591)%
    \lineheight{1}%
    \setlength\tabcolsep{0pt}%
    \put(0,0){\includegraphics[width=\unitlength]{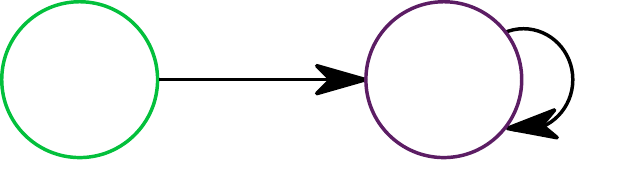}}%
    \put(0.12282163,0.1635348){\makebox(0,0)[t]{\lineheight{1.25}\smash{\begin{tabular}[t]{c}$\mathcal{N}_{\lambda}$, $t_{\mathrm{pre}}$\end{tabular}}}}%
    \put(0.69187463,0.1635348){\makebox(0,0)[t]{\lineheight{1.25}\smash{\begin{tabular}[t]{c}$\mathcal{N}_{\beta, X}$, $t_{\mathrm{post}}$\end{tabular}}}}%
    \put(0.12282163,0.0011602){\makebox(0,0)[t]{\lineheight{1.25}\smash{\begin{tabular}[t]{c}Pre-synaptic\end{tabular}}}}%
    \put(0.69187463,0.0011602){\makebox(0,0)[t]{\lineheight{1.25}\smash{\begin{tabular}[t]{c}Post-synaptic\end{tabular}}}}%
    \put(0.38664517,0.10302509){\color[rgb]{0,0,0}\makebox(0,0)[t]{\lineheight{1.25}\smash{\begin{tabular}[t]{c}$W(t)$\end{tabular}}}}%
    \put(0.98324983,0.1588526){\color[rgb]{0,0,0}\makebox(0,0)[t]{\lineheight{1.25}\smash{\begin{tabular}[t]{c}$-g(x)$\end{tabular}}}}%
  \end{picture}%
\endgroup%

}
\caption{A Simple Stochastic Model for a Synaptic Connection}
\label{figsub:model}
\end{subfigure}
\begin{subfigure}{\textwidth}
\inputpgf{Figures}{Figure_PI.pgf}
\caption{Synaptic Plasticity Kernels for Pair-Based Rules\\
    (Top left) {\em STDP curve:} update of the potentiation (in red) and depression (in blue) kernels as a function of $\Delta t{=}t_{\textup{post}}{-}t_{\textup{pre}}$.
    Exponential STDP curves of the form $\Phi_{a,i}(\Delta t){=}B_{a,i}\exp({-}\gamma_{a,i} \Delta t)$ are used in this example.\\
    (Bottom left) {\em All-to-all pair-based rules:} all pairings of pre-synaptic (in green) and post-synaptic (in purple) spikes are taken into account for the synaptic updates.
    Grey arrows indicate the interactions between  the different spikes, see the associated updates as a function of the STDP curve above (blue and red points in (Top left)).\\
    (Top right) {\em Nearest neighbor symmetric pair-based rules:} for each pre-synaptic spike (in green), only the interaction with the previous post-synaptic spike (in purple) is taken into account for the synaptic update, and conversely for post-synaptic spikes.
    Grey arrows indicate the interactions between the different spikes.\\
    (Bottom right) {\em Nearest neighbor reduced symmetric pair-based rules:} only consecutive pairings of pre-synaptic (in green) and post-synaptic spikes (in purple) are taken into account for the synaptic update, and conversely for post-synaptic spikes.
    Grey arrows indicate the interactions between the different spikes.
}
\label{figsub:pair}
\end{subfigure}
\caption{Stochastic Models of STDP}
\label{fig:modelpair}
\end{figure}

\subsection*{Nearest Neighbor Symmetric Model}
In the {\em nearest neighbor symmetric} model, whenever one neuron spikes, the synaptic weight is updated by only taking into account the last spike of the other neuron, as can be seen in Figure~\ref{figsub:pair}~(top right). If the pre-synaptic neuron fires at time $t_{\rm pre}$, the contribution to the plasticity kernel is $\Phi_{a,2}(t_{\rm pre}{-}t_{\rm post})$ , where $t_{\rm post}$ is the last post-synaptic spike before $t_{\rm pre}$.

The corresponding kernels $\Gamma^{\textup{PS}}$ are defined by, for  $m_1$, $m_2{\in}{\cal M}_p(\R_+)$ and $a{\in}\{p,d\}$,
\begin{equation}    \label{eq:FPa_nearestPS}
  \Gamma_{a}^{\textup{PS}}(m_1,m_2)(\diff t) \steq{def}
  \Phi_{a,2}(t_0(m_2,t))\,m_1(\diff t){+}\Phi_{a,1}( t_0(m_1,t))\,m_2(\diff t),
\end{equation}
with the following definition, for $m{\in}{\cal M}_p(\R_+)$ and $t{>}0$,
\begin{equation}
\label{eq:t0}
t_0(m,t)=t-\sup\{s: s{<}t, m(\{s\}){\ne}0\},
\end{equation}
with the convention that $t_0(m,0){=}+\infty$. The quantity $t_0(m,t)$ is the delay between $t$ and the last point of $m$ before $t$.

\subsection*{Nearest Neighbor Reduced Symmetric Model}
For the {\em nearest neighbor reduced symmetric}, a pre-synaptic spike at $t$ is paired  with the last post-synaptic spike at $s{\le}t$, only if there are no pre-synaptic spikes in the time interval $(s,t)$, and similarly for  post-synaptic spikes.  See Figure~\ref{figsub:pair}~(bottom right).

Accordingly, the kernels $\Gamma^{\textup{PR}}$ are defined, for  $m_1$, $m_2{\in}{\cal M}_p(\R_+)$ and $a{\in}\{p,d\}$, by
\begin{multline}    \label{eq:FPa_nearestPR}
  \Gamma_{a}^{\textup{PR}}(m_1,m_2)(\diff t) \steq{def}
\left(\Phi_{a,2}(t_0(m_2,t))\ind{t_0(m_2,t){\le}t_0(m_1,t)}\right)\,m_1(\diff t)\\{+}
\left(\Phi_{a,1}( t_0(m_1,t))\ind{t_0(m_1,t){\le}t_0(m_2,t)}\right)\,m_2(\diff t),
\end{multline}
with same notations as in~\eqref{eq:FPa_nearestPS}. For $t{>}0$, the inequality $t_0(m_2,t){<}t_0(m_1,t)$ is equivalent to the relation $m_1((t_0(m_2,t),t)){=}0$ so that there is a unique point of $m_1$ paired to $t_0(m_2,t)$ as expected, and similarly by switching $m_1$ and $m_2$. The updates of Relation~\eqref{eq:FPa_nearestPR} are therefore done only for consecutive pre- and post-synaptic spikes.

\subsubsection{\sc Calcium-Based  Models} \label{secsecsec:cbm}
Pair-based models can be characterized as phenomenological models of STDP in the sense that experimental STDP curves are taken as a core parameter of the models.
Another important class of synaptic models are derived from biological phenomenons and aims at reproducing experimental STDP curves using simple biological models.
 A common hypothesis is to use the calcium concentration in the post-synaptic neuron as a key parameter to model STDP, see~\citet{shouval_unified_2002} and~\citet{graupner_calcium-based_2012}.
Several biophysical models have studied the link between calcium concentration, and its direct implication on the dynamics of plasticity.
A  calcium-based model with saturation mechanisms has investigated the dependency on the number of pairings and the existence of different mechanisms for plasticity in~\citet{vignoud_interplay_2018}.

For these models,  synaptic plasticity is expressed as a functional of the post-synaptic calcium concentration.
For $m_1$, $m_2{\in}{\cal M}_p(\R_+)$,
the points of $m_1$, resp. $m_2$, elicit calcium transfers of amplitudes $C_1$, resp. $C_2$, followed by an exponential decay with rate~$\gamma$. If $(C_m(t))$  is the process of the calcium concentration associated to the couple $m{=}(m_1,m_2)$, it is therefore the  solution of the differential equation
\[
\diff C_m(t)= {-}\gamma C_m(t)\diff t+C_1 m_1(\diff t)+C_2 m_2(\diff t),
\]
with  some fixed initial condition.
By Lemma~\ref{FiltLem}, it can be expressed as
\begin{equation}     \label{eq:CaC}
C_m(t)\steq{def}C_m(0)e^{-\gamma t}{+}C_1\int_{(0,t]} e^{{-}\gamma(t{-}s)}m_1(\diff s) {+}C_2\int_{(0,t]} e^{{-}\gamma(t{-}s)}m_2(\diff s).
\end{equation}
The mechanisms for potentiation, resp. depression, are triggered depending on the calcium concentration. For $a{\in}\{p,d\}$, the plasticity kernel $\Gamma_a^{\textup{C}}$ is defined by,
\begin{equation}     \label{eq:FCa}
  \Gamma_{a}^{\textup{C}}\left(m_1,m_2)(\diff t\right) \steq{def} h_a(C_m(t)) \,\diff t,
\end{equation}
for some non-negative function $h_a$ on $\R_+$. The function $h_a$ is usually a threshold function of the type
\begin{equation}     \label{eq:CaThresh}
h_{a}(x)\steq{def} B_a\ind{x{\ge}\theta_a},\quad x{\ge}0,
\end{equation}
for some $B_a{\in}\R_+$ and $\theta_a{\geq}0$, as done in~\citet{graupner_calcium-based_2012}. In that case, the process $(\Omega_a(t))$ associated to $\Gamma_{a}^C$ has therefore an impact on the synaptic weight as soon as the concentration of calcium is above level $\theta_a$.

Further examples of STDP rules are presented in Section~\ref{AsecMod}.

\subsection{The Plasticity Process}
\label{secsec:plasticityprocess}
This section is devoted to the formal definition of the stochastic process describing the time evolution of the synaptic weight.
\begin{definition}
\label{def:proc}
The stochastic process $(X(t),\Omega_p(t) ,\Omega_d(t), W(t))$  with the initial state $(x_0,\omega_{0,p}, \omega_{0,d}, w_0)$, is the solution in ${\cal D}(\R_+,\R{\times}\R_+^2{\times}K_W)$ of the SDEs,  for $t{>}0$,
\begin{equation}\label{Syst}
  \begin{cases}
\quad  \diff X(t) = \displaystyle -X(t)\diff t+W(t{-})\mathcal{N}_\lambda(\diff t)-g(X(t-))\mathcal{N}_{\beta,X}(\diff t),\\
\quad \diff \Omega_a(t) = \displaystyle -\alpha\Omega_a(t)\diff t + \Gamma_a(\mathcal{N}_\lambda,{\cal N}_{\beta,X})(\diff t),\quad a{\in}\{p,d\},\\
\quad    \diff W(t) = \displaystyle M\left(\Omega_p(t),\Omega_d(t),W(t)\right)\diff t,
  \end{cases}
\end{equation}
where, $\Gamma_p$ and $\Gamma_d$ are plasticity kernels and ${\cal N}_{\beta,X}$ is the point process defined by Relation~\eqref{Nbeta} and the function $M$ is expressed by Relation~\eqref{eq:M0}.
\end{definition}
The system~\eqref{Syst} can be interpreted as fixed point equation for the process $(X(t))$ with an intricate dependence due to the point process ${\cal N}_{\beta,X}$ as an argument of the plasticity kernels.
Theorem~\ref{th:ExistTheo} gives an existence and uniqueness result for the solutions of Equations~\eqref{Syst}.   We now introduce the main assumptions on the parameters of our model which will be used throughout this paper.

Examples of different dynamics are presented in
Section~\ref{ap:figures}, for pair-based in Figure~\ref{figS:pairbased} and calcium-based in Figure~\ref{figS:calciumbased}.

\medskip
\noindent
{\bf Assumptions~A}
\begin{enumerate}
\item  {\sc Firing Rate Function}.\label{AsF}\\ $\beta$ is a non-negative, continuous function on $\R$ and $\beta(x){=}0$ for $x{\le}{-}c_\beta{\le}0$.
  \medskip
\item  {\sc Drop of Potential after Firing}.\label{AsD}\\ $g$ is continuous on $\R$ and $0{\le}g(x){\le} \max(c_g,x)$ holds for all $x{\in}\R$, for $c_g{\ge}0$.
  \medskip
\item {\sc Dynamic of Plasticity.}\label{AsM}\\ The function $M$ is
  such that, for any $w{\in}K_W$ and any \cadlag piecewise-continuous functions $h_1$ and $h_2$ on $\R_+$, the ODE
\begin{equation}\label{ODEM}
  \frac{\diff w(t)}{\diff t}{=}M(h_1(t),h_2(t),w(t)) \text{ with } w(0){=}w,
\end{equation}
for all points of continuity of $h_1$ and $h_2$, has a unique continuous solution $(S[h_1,h_2](w,t))$ in $K_W$.
\end{enumerate}
\begin{theorem}\label{th:ExistTheo}
Under Assumptions~A, the system~\eqref{Syst} has a unique  \cadlag adapted solution with initial state $(x_0,\omega_{0,p}, \omega_{d,0},w_0)$ in $\R{\times}\R_+^2{\times}K_W$.
\end{theorem}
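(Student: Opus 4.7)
The plan is to construct the solution by induction over the successive jump times of the point processes driving the system, and then check that this construction extends globally.

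\textbf{Step 1 (Deterministic evolution between events).} Fix a state $(X(t),\Omega_p(t),\Omega_d(t),W(t))$ at a time $t$ and consider an interval $[t,t']$ that contains no event of $\mathcal{N}_\lambda$ and no accepted post-synaptic spike (see Step 2). On this interval $X$ satisfies $\dot X=-X$, hence $X(s)=X(t)e^{-(s-t)}$. The restriction of $\Gamma_a(\mathcal{N}_\lambda,\mathcal{N}_{\beta,X})$ to $[t,t']$ is, by the measurability condition~\eqref{Consist}, a Radon measure determined by the past up to time $t$ of both point processes; Lemma~\ref{FiltLem} then gives a unique càdlàg expression for $(\Omega_a(s))_{s\in[t,t']}$. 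Finally, Assumption~c) delivers a unique continuous solution of $\dot W=M(\Omega_p,\Omega_d,W)$ in $K_W$ starting from $W(t)$.

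\textbf{Step 2 (Construction of the next event).} Starting from the state at the $n$-th event time $t_n$, use Step~1 tentatively on $[t_n,t_{\mathrm{pre},N(t_n)+1}]$ where $t_{\mathrm{pre},N(t_n)+1}$ is the next point of $\mathcal{N}_\lambda$. The deterministic profile $s\mapsto\beta(X(s))$ is continuous on this compact interval, so bounded by some $B_n<\infty$. By~\eqref{Nbeta}, candidate post-synaptic spikes correspond to points of $\mathcal{P}$ in $[0,B_n]\times(t_n,t_{\mathrm{pre},N(t_n)+1}]$, a strip containing finitely many points a.s. Order them by time and accept the first with $\sigma\le\beta(X(u-))$; this defines the next post-synaptic time, otherwise set $t_{n+1}=t_{\mathrm{pre},N(t_n)+1}$. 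At $t_{n+1}$ update $X$ by $+W(t_{n+1}-)$ or $-g(X(t_{n+1}-))$ according to the type of event, add to $\Omega_a(t_{n+1}-)$ any atom of $\Gamma_a$ at $t_{n+1}$, and keep $W$ continuous.

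\textbf{Step 3 (Non-explosion and globality).} The points of $\mathcal{N}_\lambda$ do not accumulate in finite time, so it suffices to show that on each interval $[t_{\mathrm{pre},n},t_{\mathrm{pre},n+1})$ only finitely many post-synaptic spikes occur. The continuous dynamics $\dot X=-X$ contracts $X$ toward $0$; by Assumption~b), a post-synaptic jump from $x$ lands at $x-g(x)\in[\min(x-c_g,0),x]$, and by Assumption~a) post-synaptic spikes cannot occur once $X\le -c_\beta$. Combining these, $|X|$ on $[t_{\mathrm{pre},n},t_{\mathrm{pre},n+1})$ stays bounded by $\max(|X(t_{\mathrm{pre},n}+)|,c_\beta+c_g)$, a finite $\mathcal{F}_{t_{\mathrm{pre},n}}$-measurable quantity. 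Hence $\beta(X(\cdot))$ is dominated by a constant on that interval, and the accepted post-synaptic spikes form a thinning of a finite-rate Poisson process, a.s.\ finitely many. Iterating, the $n$-th event time tends to $+\infty$ and the construction defines a càdlàg process on all of $\R_+$.

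\textbf{Step 4 (Uniqueness and adaptedness).} Any càdlàg adapted solution must agree with the process of Steps~1--3: between events the trajectories are forced by Lemma~\ref{FiltLem} and Assumption~c); jump sizes at events are prescribed by~\eqref{Syst}; and the event times themselves are uniquely determined by $\mathcal{N}_\lambda$, $\mathcal{P}$ and the past trajectory through~\eqref{Nbeta}. Adaptedness is immediate, since each inductive step relies only on data measurable with respect to $\mathcal{F}_{t_{n+1}}$. The main obstacle is the a priori bound in Step~3: one has to exploit Assumptions~a) and~b) in tandem to rule out the pathological scenario in which successive post-synaptic drops would push $X$ into regions where $\beta$ is very large, creating an accumulation of spikes; everything else reduces to standard deterministic ODE arguments applied piece by piece.
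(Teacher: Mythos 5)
Your proposal is correct and follows essentially the same route as the paper: construction by induction over the successive jump instants, with the deterministic flow (exponential decay of $X$, Lemma~\ref{FiltLem} for $\Omega_a$ via the consistency property~\eqref{Consist}, Assumption~c for $W$) between events, thinning of $\mathcal{P}$ to generate post-synaptic spikes, and uniqueness interval by interval. Your non-explosion argument in Step~3 is a direct per-interval version of the paper's Lemma~\ref{lemma:nonexplo} (which argues by contradiction after the last pre-synaptic spike among a putative accumulation), and your explicit bound $\max(|X(t_{\mathrm{pre},n}+)|,c_\beta+c_g)$ is in fact slightly more careful than the paper's assertion that $|X|$ can only decrease between pre-synaptic spikes.
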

\begin{proof}
The construction is done on the successive intervals between two consecutive instants of jump of the system. The non-decreasing sequence $(s_n)$ of these instants is defined by induction.
  
The first jump of $(X(t))$ occurs at time $s_1$ and is defined as the minimum of the first jumps of the processes
\begin{equation}\label{TL1}
  ({\cal N}_\lambda((0,t])) \text{ and  } \left(\int_{(0,t]}{\cal P}\left(\rule{0mm}{4mm}\left(\rule{0mm}{3mm}0,\beta\left(x_0e^{-u}\right)\right],\diff u\right)\right).
\end{equation}
With Relation~\eqref{ODEM}, for $0{\le}t{<}s_1$, we set $X(t){=}x_0e^{-t}$ and  $W(t){=}S[\Omega_p^1,\Omega_d^1](w_0,t)$, with
\[
\Omega_a^1(t) \steq{def} \omega_{0,a}{+}\int_{(0,t)} e^{-\alpha(t-s)} \Gamma_a(\underline{0},\underline{0})(\diff s),\quad a{\in}\{p,d\},
\]
and $W(s_1){=}W(s_1{-})$, where $\underline{0}$ is the null point process.
\begin{enumerate}
\item If $s_1$ is the first point of  ${\cal N}_\lambda$, define
  \[
f_1\steq{def}{+} \text{ and }  X(s_1)=x_0e^{-s_1}{+}W(s_1{-}).
  \]
\item If $s_1$ is the first point of the second point process of Relation~\eqref{TL1}, set
  \[
f_1\steq{def}{-}  \text{ and }   X(s_1)=x_0e^{-s_1}{-}g\left(x_0e^{-s_1}\right).
  \]
\end{enumerate}
The mark $f_1$ indicates the nature of the jump occurring at time $s_1$, i.e. if the spike was fired by the pre- or post-synaptic neuron.

The process $(X(t),\Omega_p^1(t),\Omega_d^1(t),W(t))$ satisfies the equations~\eqref{Syst} on the time interval $[0,s_1]$ and, by Relation~\eqref{eq:ft}, $s_1$ is a stopping time with respect to~$({\cal F}_t)$.

Assume by induction that, for $n{\ge}0$, the variables $(s_k,f_k, 1{\le}k{\le}n)$ and the adapted \cadlag process $(X(t),W(t), t{\in}[0,s_n])$ are defined, and $s_n$ is a stopping time. For $a{\in}\{p,d\}$, let
\begin{equation}\label{OmP}
\Omega_a^{n+1}(t) \steq{def} \omega_a{+}\int_{(0,t)} e^{-\alpha(t-s)} \Gamma_a\left(\sum_{k=1}^n\delta_{s_k}\ind{f_k{=}+},\sum_{k=1}^n\delta_{s_k}\ind{f_k{=}-}\right)(\diff s).
\end{equation}
In Definition~\ref{def:gamma}, the ${\cal G}_t{\otimes}{\cal G}_t$
measurability property, gives that for any $n{\ge}1$ and $k{<}n$, the
process $(\Omega_a^{j}(t))$ does not depend on the index $j{\in}\{k,\ldots,n\}$ on $[0,s_k]$.
The instant $s_{n+1}{>}s_n$ is defined as the minimum of the first jumps of the two point processes,
\begin{equation}\label{TL2}
  ({\cal N}_\lambda([s_n,t]), t{>}s_n), \left(\int_{[s_n,t]}{\cal P}\left[\left(0,\beta\left(X(s_n)e^{-(u-s_n)}\right)\right],\diff u\right], t{>}s_n\right).
\end{equation}
The fact that $s_n$ is a stopping time and the strong Markov property of the Poisson processes ${\cal N}_\lambda$ and ${\cal P}$ give that $s_{n+1}$ is also a stopping time.
For $s_n{\le}t{<}s_{n+1}$, set
\[
W(t){=}S[\Omega_p^{n+1},\Omega_d^{n+1}](W(s_n),t{-}s_n) \text{ and }X(t)\steq{def}X(s_n)e^{-(t-s_n)},
\]
and $W(s_{n+1}){=}W(s_{n+1}{-})$, and 
\begin{enumerate}
\item if $s_{n+1}$ is a point of  ${\cal N}_\lambda$, define $f_{n+1}{=}{+}$, and 
  \[
  X(s_{n+1}){\steq{def}}X(s_n)e^{-(s_{n+1}-s_n)}{+}W(s_{n+1}{-}),
    \]
\item Otherwise, we set $f_{n+1}{=}{-}$, and 
  \[
  X(s_{n+1}){\steq{def}}X(s_n)e^{-(s_{n+1}-s_n)}{-}g\left(X(s_n)e^{-(s_{n+1}-s_n)}\right).
  \]
\end{enumerate}
We have thus defined by induction  a stochastic process $(X(t),W(t))$ on sequence of  time intervals $(s_n,s_{n+1})$, $n{\ge}1$.
We now prove that the process is defined on the whole real half-line, i.e. that the sequence $(s_n)$ is  almost surely converging to infinity.
This is the object of the following lemma.

\medskip
\noindent
\begin{lemma}[Non-Explosive Behavior]
\label{lemma:nonexplo}
Under Assumptions~A, the sequence of successive jump instants $(s_n)$ is almost surely converging to infinity. 
\end{lemma}
\begin{proof}
Denote by  ${\cal E}_0$ the  event  where the sequence $(s_n)$ is bounded and assume that it has a positive probability. 
On the event ${\cal E}_0$, almost surely, only a finite number of points of the Poisson process ${\cal N}_\lambda$  may be points of the sequence $(s_n)$.  Therefore, there  exists some $N_0{\in}\N$ and  a subset ${\cal E}_1$ of ${\cal E}_0$ of  positive probability such that for $n{\ge}N_0$, one has $f_n{=}-$, i.e. the jumps are due to the second point process of Relation~\eqref{TL2} after time~$s_{N_0}$.

On the event ${\cal E}_1$, for $n{\ge}N_0$ one has
$X(s_n{-}){<}|X(s_{N_0})|$,
almost surely, because $(|X(t)|)$ can only decrease when there are no pre-synaptic spikes. Consequently, as $\beta(x)$ is null for $x{<}{-}c_{\beta}$, we have that $\max(\beta(X(t)){:}t{>}s_{N_0}){<}+\infty$. Therefore, the successive jump instants $(s_n,n{\ge}N_0)$ cannot stay bounded on the event  ${\cal E}_1$. This is a contradiction. The sequence $(s_n)$ is therefore converging to infinity almost surely.
\end{proof}

A direct consequence of this result is that, from the very definition of the sequence $(s_n)$,  for any $t{>}0$, there exists $n_0$ such that if $n{\ge}n_0$ then
\[
\sum_{k=1}^n\delta_{s_k}\ind{f_k{=}+}\cap[0,t]={\cal N}_\lambda \cap[0,t] \text{ and }\sum_{k=1}^n\delta_{s_k}\ind{f_k{=}{-}}\cap[0,t]={\cal N}_{\beta,X}\cap[0,t],\quad\text{a.s.},
\]
recall that ${\mu}\cap[0,t]$ is the measure $\mu{\in}{\cal M}(\R_+)$ restricted to the interval $[0,t]$.
For $a{\in}\{p,d\}$, again with  the ${\cal G}_t{\otimes}{\cal G}_t$-measurability property of  plasticity kernels, the quantity
\[
\Omega_a^{n}(t) {=} \Omega_a(0){+}\int_{(0,t)} e^{-\alpha(t-s)} \Gamma_a\left({\cal N}_\lambda,{\cal N}_{\beta,X}\right)(\diff s)
\]
is constant for  $n{\ge}n_0$, it is defined as $\Omega_a(t)$.
Furthermore, for $s{\le}t$ and $n{\ge}n_0$,
\[
\diff W(s) =  M\left(\Omega_p^n(t),\Omega_d^n(s), W(s)\right)\diff s=M\left(\Omega_p(t),\Omega_d(s), W(s)\right)\diff s.
\]
We have thus the existence of a solution to Relation~\eqref{Syst}. The uniqueness is clear on any time interval $[0,s_n]$, $n{\ge}1$, and therefore almost surely  on $\R_+$.

\end{proof}

\section{Markovian Plasticity Kernels}
\label{sec:markov}\label{secsec:kenerlM}
In this section we introduce an important subclass~$(\mathcal{M})$ of plasticity kernels that leads to a Markovian formulation of the whole plasticity process.
In this context, it turns out  that the associated synaptic weight process $(W(t))$ can be investigated with a scaling approach which is often used, sometimes implicitly, in the literature of physics in neuroscience. As it will be seen,  plasticity kernels of pair-based models of Section~\ref{secsecsec:pairbased} and  of calcium-based models of Section~\ref{secsecsec:cbm} are of class ${\cal M}$. The follow-up paper \citet{robert_stochastic_2020_2} is devoted to the scaling analysis of these plasticity kernels.
\begin{definition}[Kernels of Class~$(\mathcal{M})$]
\label{def:classM}
A plasticity kernel $\Gamma$ is of class~$(\mathcal{M})$  if, for $m_1$, $m_2{\in}{\cal M}_p(\R_+)$,
\begin{equation}\label{KerM}
\Gamma(m_1,m_2)(\diff t)=n_{0}(z(t))\diff t+n_{1}(z(t{-}))m_1(\diff t) +n_{2}(z(t{-}))m_2(\diff t),
\end{equation}
where
\begin{enumerate}
\item For $i{=}0$, $1$, $2$, $n_{a,i}$  is a non-negative measurable function on $\R_+^\ell$,  where $\ell{\in}\N_{\ast}$;
\item $(z(t))$ is a \cadlag function with values in $\R_+^\ell$, solution of the SDE
\begin{equation}\label{KerZ}
    \diff z(t) =({-}\gamma\odot z(t){+}k_0)\diff t +k_1(z(t{-}))m_1(\diff t)+k_2(z(t{-}))m_2(\diff t),
\end{equation}
\begin{itemize}
\item $\gamma{\in}\R_{+}^\ell$, $a{\odot}b{=}(a_i{\times}b_i)$  if $a{=}(a_i)$ and $b{=}(b_i)$ in $\R_+^\ell$;
\item $k_0{\in}\R_+^\ell$ is a constant and  $k_{1}$ and   $k_{2}$  are measurable functions from $\R_+^\ell$ to $\R^\ell$. Furthermore, the $(k_i)$  are such that the function $(z(t))$ has values in $\R_+^{\ell}$ whenever $z(0){\in}\R_+^{\ell}$.
\end{itemize}
\end{enumerate}
\end{definition}
It is important to note that the function $(z(t))$ is a functional of the pair $(m_1,m_2)$.
The fact that $z(t)$ stay non-negative is an important feature of class~$(\mathcal{M})$ kernels. For example,  we  may have functions $k_1$ or $k_2$ of the form,
\[
 k_i(z) = B_i{-} b_{i}{\odot}z
\]
where $B_i{\in}\R_+^{\ell}$, and $b_{i}{\in}\{0,1\}^{\ell}$.

If $\Gamma$ is of class~$(\mathcal{M})$ and $(z(t))$ is its associated \cadlag process, with
Relation~\eqref{KerZ} it is easily seen  that, for any $t{>}0$, the functional
\[
\begin{cases}
({\cal M}_p(\R_+)^2,{\cal G}_t{\otimes}{\cal G}_t)&{\longrightarrow } \ ({\cal M}_+([0,t]),{\cal B}({\cal M}_+([0,t])))\\
\phantom{{\cal M}_p(\R_+)^2}(m_1,m_2)&{\longrightarrow} \Gamma (m_1,m_2)(\diff u \cap [0,t])
\end{cases}
\]
is indeed ${\cal G}_t$-measurable, where $({\cal G}_t)$ is the filtration of Definition~\eqref{def:gamma}.
\begin{proposition}[A Markovian Formulation of Plasticity]\label{MPlastprop}
If $\Gamma_a$, $a{\in}\{p,d\}$, are plasticity kernels of class~$(\mathcal{M})$ associated  to $(n_{a,i},k_i), i{\in}\{0,1,2\}$, $a{\in}\{p,d\}$ and $\gamma{\in}\R_+^\ell$ and under Assumptions~A, the solution of Relations~\eqref{Syst} of Theorem~\ref{th:ExistTheo} is such that the stochastic process
$(U(t))\steq{def}(X(t),Z(t),\Omega_{p}(t),\Omega_{d}(t),W(t))$
is a Markov process on $\mathcal{S}_{\mathcal{M}}(\ell)\steq{def}\R{\times}\R_+^{\ell}{\times}\R_+^2{\times}K_W$, solution of the SDE,
\begin{equation}\label{eq:markov}
\begin{cases}
    \quad\diff X(t) &\displaystyle = -X(t)\diff t+W(t)\mathcal{N}_{\lambda}(\diff t)-g\left(X(t-)\right)\mathcal{N}_{\beta,X}\left(\diff t\right),\\
    \quad\diff Z(t) &\displaystyle {=}   (-\gamma\odot Z(t)+ k_0)\diff t\\ &\hspace{2cm}+k_1(Z(t{-}))\mathcal{N}_{\lambda}(\diff t)+k_2(Z(t{-}))\mathcal{N}_{\beta,X}(\diff t),\\
    \quad \diff \Omega_a(t)&\displaystyle ={-}\alpha\Omega_a(t)\diff t {+}n_{a,0}(Z(t))\diff t\\&\hspace{0cm}+
    n_{a,1}(Z(t{-}))\mathcal{N}_{\lambda}(\diff t){+}n_{a,2}(Z(t{-}))\mathcal{N}_{\beta,X}(\diff t),\quad a{\in}\{p,d\},\\
    \quad\diff W(t) &\displaystyle = M\left(\Omega_{p}(t),\Omega_{d}(t), W(t) \right)\diff t.
\end{cases}
\end{equation}
\end{proposition}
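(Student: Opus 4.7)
\medskip
\noindent
\textbf{Plan of proof.} The strategy is to start from the solution $(X(t),\Omega_p(t),\Omega_d(t),W(t))$ given by Theorem~\ref{th:ExistTheo} and enrich it with an auxiliary process $(Z(t))$ that records the internal state of the class-$(\mathcal{M})$ kernels, and then verify that the enlarged state satisfies the prescribed SDE and has the Markov property. The key observation is that the map $(m_1,m_2)\mapsto\Gamma_a(m_1,m_2)(\mathrm{d}t)$ on class-$(\mathcal{M})$ kernels is encoded, via Relations~\eqref{KerM}--\eqref{KerZ}, in a pointwise functional of a common auxiliary trajectory $(z(t))$ driven by the same two input measures; once we instantiate $m_1={\cal N}_\lambda$ and $m_2={\cal N}_{\beta,X}$, this auxiliary trajectory becomes a coordinate of the state.

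First I would define $(Z(t))$ as the unique \cadlag solution in $\R_+^\ell$ of
\[
\diff Z(t) = ({-}\gamma\odot Z(t){+}k_0)\diff t + k_1(Z(t-))\,\mathcal{N}_\lambda(\diff t) + k_2(Z(t-))\,\mathcal{N}_{\beta,X}(\diff t),
\]
constructed piecewise between the jump times $(s_n)$ of Theorem~\ref{th:ExistTheo}, using Lemma~\ref{FiltLem} coordinate-by-coordinate on the drift, and the maps $k_1,k_2$ at each jump. By the non-explosion statement of Lemma~\ref{lemma:nonexplo} and the assumption that the $k_i$ keep $Z$ in $\R_+^\ell$, this process is well defined on $\R_+$ and adapted. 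By Relation~\eqref{KerM} applied to $(m_1,m_2)=({\cal N}_\lambda,{\cal N}_{\beta,X})$, which is a valid substitution because of the ${\cal G}_t\otimes{\cal G}_t$-measurability in Definition~\ref{def:gamma}, we obtain
\[
\Gamma_a({\cal N}_\lambda,{\cal N}_{\beta,X})(\diff t)=n_{a,0}(Z(t))\diff t+n_{a,1}(Z(t-))\,\mathcal{N}_\lambda(\diff t)+n_{a,2}(Z(t-))\,\mathcal{N}_{\beta,X}(\diff t),
\]
and substituting into the second line of Relation~\eqref{Syst} yields exactly the third line of~\eqref{eq:markov}. The first and last lines of~\eqref{eq:markov} are identical to the corresponding lines of~\eqref{Syst}, so $(U(t))$ solves the SDE system.

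Next, for the Markov property, I would argue that $(U(t))$ is a piecewise-deterministic Markov process in the sense that between jumps it follows the ODE with vector field depending only on the current state $(X,Z,\Omega_p,\Omega_d,W)$, while jumps occur either at the atoms of $\mathcal{N}_\lambda$, which is a homogeneous Poisson process independent of $U$, or at the atoms of $\mathcal{N}_{\beta,X}$, whose stochastic intensity $\beta(X(t-))$ is a function of the current state alone by definition~\eqref{Nbeta}. The jump amplitudes at a pre-synaptic spike are $(W(t-),k_1(Z(t-)),n_{p,1}(Z(t-)),n_{d,1}(Z(t-)),0)$ and at a post-synaptic spike they are $({-}g(X(t-)),k_2(Z(t-)),n_{p,2}(Z(t-)),n_{d,2}(Z(t-)),0)$, both functions of the current state only. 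Combining these facts with the strong Markov property of the driving Poisson processes $\mathcal{N}_\lambda$ and $\mathcal{P}$, a standard construction (e.g.~by exhibiting the infinitesimal generator acting on test functions $F\in\mathcal{C}_b^1(\mathcal{S}_{\mathcal{M}}(\ell))$) yields the Markov property of $(U(t))$.

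The main obstacle, and the reason the enlargement by $Z$ is needed, is that a plasticity kernel $\Gamma_a({\cal N}_\lambda,{\cal N}_{\beta,X})(\diff t)$ is \emph{a priori} a functional of the entire past of both point processes, so $(X,\Omega_p,\Omega_d,W)$ alone is generally not Markov. The class-$(\mathcal{M})$ structure is precisely what allows this infinite-memory functional to be represented through the finite-dimensional process $(Z(t))$. The verification that $(Z(t))$ faithfully tracks this memory under the self-referential choice $m_2={\cal N}_{\beta,X}$, where $\mathcal{N}_{\beta,X}$ itself depends on $X$, is the point that requires care; it rests on the ${\cal G}_t$-adaptedness built into Definition~\ref{def:gamma} together with the inductive construction of the jump times $(s_n)$ carried out in the proof of Theorem~\ref{th:ExistTheo}, which makes the substitution of $({\cal N}_\lambda,{\cal N}_{\beta,X})$ into~\eqref{KerZ} consistent on each interval $[0,s_n]$.
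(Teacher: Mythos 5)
Your proposal is correct and takes essentially the same route as the paper: the paper's proof likewise rests on Theorem~\ref{th:ExistTheo} for existence and uniqueness of the solution and then identifies $(U(t))$ as a piecewise-deterministic Markov process (citing Chapter~2 of Davis's monograph) to obtain the Markov property. Your additional details --- the piecewise construction of $(Z(t))$ along the jump times $(s_n)$, the pathwise substitution of $({\cal N}_\lambda,{\cal N}_{\beta,X})$ into the class-$(\mathcal{M})$ representation, and the explicit state-dependent jump amplitudes and intensity --- simply spell out what the paper's terse proof leaves implicit.
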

\begin{proof}
Theorem~\ref{th:ExistTheo} shows the existence and uniqueness of such a process $(U(t))$. The process $(U(t))$ is a piecewise deterministic Markov process in the sense of~\citet{davis_markov_1993} and consequently has the Markov property. See Chapter~2 of~\citet{davis_markov_1993}. An expression of its infinitesimal generator is given in Section~\ref{ap:generator} of \SMS.
\end{proof}
It should be noted that, due to the dimension of the state space,  the Markov property of $(U(t))$  cannot be really used in practice in our analysis. The representation in terms of SDEs in Relation~\eqref{eq:markov} turns out to be useful in the scaling approach presented in~\citet{robert_stochastic_2020_2}.

\subsection*{Motivation for Markovian Kernels}
The processes $(\Omega_p(t))$ and $(\Omega_d(t))$ determining the synaptic plasticity depend on the process $(Z(t))$ in a non-linear way.
The coordinates of $(Z(t)){=}(Z_i(t))$ may be interpreted as the concentration of chemical components created/suppressed by pre-synaptic and/or post-synaptic spikes,  with some leaking mechanism.
Calcium is such an example, see Relation~\eqref{eq:CaC}.
A simple case is when each coordinate of $(Z(t))$ is associated either to pre- or post-synaptic spikes,  i.e. it satisfies
\[
\diff Z_i(t)= {-}\gamma_i Z_i(t)\diff t +B_i {\cal N}_{\lambda}(\diff t) \text{ or } \diff Z_i(t)= {-}\gamma_i Z_i(t)\diff t +B_i {\cal N}_{\beta, X}(\diff t).
\]
Moreover, if $Z_i$ needs to be reset to $B_i$ when one of the neurons spikes, we just need to replace $B_i$ by $B_i{-}Z_i(t{-})$  in these equations. 

We now show that calcium-based models and several pair-based models, can be represented in such a setting, i.e.\ that their plasticity kernels are of class~$(\mathcal{M})$.
\subsection{Examples}
\subsubsection{Calcium-Based Models}
\label{secsecsec:cbmclassM}
For this set of models, the class $(\cal{M})$ property is fairly clear. Relations~\eqref{eq:CaC} and~\eqref{eq:FCa} give that,  for $a{\in}\{p,d\}$ and $m_1$, $m_2{\in}{\cal M}_p(\R_+)$,
\[
\Gamma_{a}^C\left(m_1,m_2)(\diff t\right) \steq{def} h_a(C_m(t))\diff t,
\]
where, if $m{=}(m_1,m_2)$, $(C_m(t))$ is a \cadlag solution of the differential equation
\[
\diff C_m(t)= {-}\gamma C_m(t)\diff t+C_1 m_1(\diff t)+C_2 m_2(\diff t).
\]
The process $(Z(t))$ is simply the one-dimensional process $(C_{{\cal N}_{\lambda}, {\cal N}_{\beta,X}}(t))$.
Markovian dynamics of the calcium-based model are illustrated in Figure~\ref{figS:calciumbased}-(a).

\subsubsection{Pair-Based Models}
\label{secsecsec:pbsclassM}
Several kernels associated to pair-based models defined by Relation~\eqref{eq:FPa} are also of class~$(\mathcal{M})$. This type of Markov property has been mentioned in~\citet{morrison_phenomenological_2008}.
Markovian  models including STDP models described in Section~\ref{secsec:plasticitykernel} are presented in Section~\ref{ap:generator} of \SMS.

\subsubsection{All-to-all Model}\label{STDPPA}
The class~$({\cal M})$ holds when the STDP functions $\Phi$  are exponential, i.e.\   when,  for $a{\in}\{p,d\}$ and $i{\in}\{1,2\}$,
\[
\Phi_{a,i}(t){=}B_{a,i}\exp({-}\gamma_{a,i} t), \quad t{\ge}0.
\]
with $B_{a,i}{\in}\R_+$ and $\gamma_{a,i}{>}0$.
For $m_1$ and $m_2{\in}{\cal M}_p(\R_+)$, denote by $(z_{a,i}(t))$,  the \cadlag solution of the differential equation
\[
\diff z_{a,i}(t)={-}\gamma_{a,i}z_{a,i}(t)\diff t +B_{a,i}m_i(\diff t),
\]
with $z_{a,i}(0){=}0$.  Lemma~\ref{FiltLem} gives the relation
\[
z_{a,i}(t)=B_{a,i}\int_{(0,t]} e^{-\gamma_{a,i}(t-s)}m_i(\diff s).
  \]
The  process $(z(t))$ is then defined as $(z_{p,1}(t),z_{p,2}(t),z_{d,1}(t),z_{d,2}(t))$. 
The plasticity kernel of this model, see Relation~\eqref{eq:FPa}, can be expressed as 
\[
\Gamma_{a}^{\textup{PA}}(m_1,m_2) = n_{a,1}(z(t-))m_1(\diff t){+}n_{a,2}(z(t-))m_2(\diff t),
 \]
the functions $(n_{a,i})$ are defined by, for $z{=}(z_{a,i}){\in}\R_+^4$,  $n_{a,1}(z){=}z_{a,2}$ and $n_{a,2}(z){=}z_{a,1}$. 

An example of dynamics with plasticity kernels and associated Markov process $(Z_{a,i})$ is presented in Figure~\ref{figS:pairbased}-(a).
Similar models, using auxiliary processes $(Z_{a,i})$ can be devised for nearest STDP rules. See Section~\ref{ap:generator} of \SMS, Figure~\ref{figS:pairbased}-(b) for the nearest neighbor symmetric STDP and Figure~\ref{figS:pairbased}-(c) for the nearest neighbor reduced symmetric STDP.


\subsubsection{Nearest Neighbor Models}\label{STDPNS}\label{STDPNR}
For $m{\in}{\cal M}_p(\R_+)$ and $t{>}0$,  the variable $t_0(m,t)$ of Relation~\eqref{eq:t0} used in the two models presented in Section~\ref{secsecsec:pairbased},  
\[
t_0(m,t)=t-\sup\{s: s{<}t, m(\{s\}){\ne}0\},
\]
can be expressed as the solution $(z_m(t))$ of the differential equation,
\[
\diff z_m(t)=\diff t {-}z_m(t-)m(\diff t),
\]
with $z_m(0){=}0$.

For $m_1$ and $m_2{\in}{\cal M}_p(\R_+)$, we define $(z(t)){=}(z_{m_1}(t),z_{m_2}(t))$, Relation~\eqref{KerZ} holds with
$\gamma{=}(0,0)$, $k_0{=}(1,1)$ and, for $z{=}(z_1,z_2)$,  $k_1(z){=}(-z_1,0)$ and $k_2(z){=}(0,-z_2)$.

In this setting, both nearest models are of  class ${\cal M}$:
\begin{itemize}
\item The nearest neighbor symmetric model, $
  \Gamma_{a}^{\textup{PS}}$ of
  Relation~\eqref{eq:FPa_nearestPS},\\ with $n_{a,0}(z){=}0$,
  $n_{a,1}(z){=} \Phi_{a,2}(z_2)$ and $n_{a,2}(z){=} \Phi_{a,1}(z_1)$.
 \item The nearest neighbor reduced symmetric model, $ \Gamma_{a}^{\textup{PR}}$ of Relation~\eqref{eq:FPa_nearestPR},\\ with
$  n_{a,0}(z){=}0$, $n_{a,1}(z){=} \Phi_{a,2}(z_2)\ind{z_{2}{\le}z_1}$
   and $n_{a,2}(z){=} \Phi_{a,1}(z_1)\ind{z_{1}{\le}z_2}$.
\end{itemize}

\subsection{Extensions}
For all-to-all models, the exponential STDP function  allows the representation of time evolution of plastic synapticity with a finite-dimensional process $(Z(t))$ and, therefore, the associated kernels are of class ${\cal M}$.

For a general function $\Phi$, it is however possible to express the system as a Markovian system, by taking the instants of all past instants of spikes
\[
(Z_{1,k}(t))\steq{def} \left(t_k({\cal N}_\lambda,t), k{\ge}0\right) \text{ and } (Z_{2,k}(t))\steq{def}\left(t_k({\cal N}_{\beta,X},t), k{\ge}0\right)
\]
with, for $k{\ge}0$, $m{\in}{\cal M}_p(\R_+)$ and $t{>}0$,
\[
t_k(m,t)=t{-}\sup\{s{\le}t: m([s,t]){>}k\}
\]
$t_k(m,t)$ represent the time between $t$ and the $k$th last spike of $m$. In an analogous way as for Definition~\eqref{eq:t0}, we have, for $k{\ge}1$, 
\[
\diff t_k(m,t) = \diff t {+} \left(t_k(m,t{-})-t_{k-1}(m,t{-})\right)m(\diff t),
\]
so that the processes $(Z_{i,k}(t),k{\ge}0)$, $i{\in}\{1,2\}$ would satisfy SDE as in Relation~\eqref{eq:markov}.
Keeping track of \emph{all} instant previous spikes, we can express the plasticity kernels with an infinite dimensional Markovian process. 
Unfortunately, there are fewer results, concerning  equilibrium distributions for example, in such a context. This is why we restrict our study to finite-dimensional systems.

\subsection*{Markov Processes Associated to Cellular Processes}\label{sec:Cell}
When the plasticity process $(W(t))$ is constant and equal to $w{\in}\R$, the associated solution $(X^w(t),Z^w(t))$ of the first two SDEs  of Relations~\eqref{eq:markov} is clearly a Markov process driven by the pre- and post-synaptic spikes. The invariant distribution of this  Markov process plays in important role in the scaling analysis of the process $(W(t))$ developed in~\citet{robert_stochastic_2020,robert_stochastic_2020_2}.  For reasons explained in the introduction of~\cite{robert_stochastic_2020_2}, these processes are referred to as {\em fast processes}.

It is easily seen that its infinitesimal generator is defined by,
if $f{\in}\mathcal{C}_b^1(\R{\times}\R_+^\ell)$ and $v{=}(x,z){\in}\R{\times}\R_+^{\ell}$, then
\begin{multline}\label{BFGen}
B^F_w(f)(v)\steq{def}{-}x\frac{\partial f}{\partial x}(x,z){+}\croc{{-}\gamma{\odot}z+k_0,\frac{\partial f}{\partial z}(x,z)}
\\+\lambda \left(\rule{0mm}{4mm}f(x{+}w,z{+}k_1(z)){-}f(v)\right)
+\beta(x)\left(\rule{0mm}{4mm}f(x{-}g(x),z{+}k_2(z)){-}f(v)\right),
\end{multline}
with
\[
\frac{\partial f}{\partial z}(x,z){=}\left(\frac{\partial f}{\partial z_i}(x,z), i{\in}\{1,\ldots,\ell\}\right).
\]
Examples of fast processes for classical STDP rules are presented in Section~\ref{ap:generator} of \SMS. 
The following proposition is proved in Section~5 of~\citet{robert_stochastic_2020}.
\begin{proposition}\label{InvPropFP}
  Under the Assumptions~A-a and~A-b and if the functions $k_1$ and $k_2$ are bounded and all coordinates of $\gamma$ are positive then the Markov process $(X^w(t),Z^w(t))$ has a unique invariant distribution $\Pi_w$.
\end{proposition}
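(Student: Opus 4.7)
The plan is to combine a Foster--Lyapunov drift argument for existence with a regeneration / Doeblin argument for uniqueness, applied to the piecewise deterministic Markov process with generator $B^F_w$.

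\smallskip
\noindent\textbf{Existence.} I would first exhibit a Lyapunov function, the natural candidate being $V(x,z)=x^2+\|z\|^2$ (or a weighted variant). For the $z$-component, the drift term gives $\langle -\gamma\odot z+k_0,2z\rangle\le -2\min_i\gamma_i\|z\|^2+2\|k_0\|\|z\|$, using that every coordinate of $\gamma$ is positive; the two jump contributions $\lambda(\|z+k_1(z)\|^2-\|z\|^2)$ and $\beta(x)(\|z+k_2(z)\|^2-\|z\|^2)$ add at most a constant plus terms linear in $\|z\|$ since $k_1,k_2$ are bounded by hypothesis. For the $x$-component, the drift term $-x\cdot 2x=-2x^2$ is strongly contractive; the pre-synaptic jump contributes only a bounded amount; the post-synaptic jump is contractive or harmless because $0\le g(x)\le\max(c_g,x)$ and $\beta(x)=0$ for $x\le -c_\beta$ (Assumption~A-b and A-a). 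Assembling these estimates yields $B^F_w(V)(x,z)\le -\eta V(x,z)+C$ outside a compact set, for some $\eta,C>0$. Dynkin's formula and Gronwall then give $\sup_{t\ge 0}\mathbb{E}[V(X^w(t),Z^w(t))]<\infty$, hence tightness of the time marginals; Krylov--Bogolyubov delivers an invariant probability measure $\Pi_w$.

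\smallskip
\noindent\textbf{Uniqueness.} I would argue by a ``quiet window'' regeneration on a compact set $K\subset\R\times\R_+^{\ell}$. On $K$, continuity of $\beta$ gives $\bar\beta:=\sup_{(x,z)\in K}\beta(x)<\infty$, so the probability that neither $\mathcal{N}_\lambda$ nor $\mathcal{N}_{\beta,X^w}$ fires in a window of length $T$ is at least $\exp(-(\lambda+\bar\beta)T)>0$. During such a window the process evolves purely deterministically: $X^w$ relaxes exponentially to $0$ and each $Z_i^w$ relaxes exponentially, at rate $\gamma_i>0$, to $k_{0,i}/\gamma_i$. Choosing $T$ large enough, any two initial conditions in $K$ are driven within $\varepsilon$ of the common deterministic point $(0,k_0/\gamma)$. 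This provides a minorization of the time-$T$ transition kernel on $K$ by a positive multiple of a common reference measure, i.e.\ a Doeblin-type condition on $K$. Combined with the Lyapunov bound --- which ensures that the skeleton chain returns to $K$ infinitely often a.s., hence is Harris recurrent --- this forces uniqueness of $\Pi_w$.

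\smallskip
\noindent\textbf{Main obstacle.} The delicate point is the coupling of two copies of $(X^w,Z^w)$ through the state-dependent post-synaptic rate $\beta(X^w)$: one cannot naively drive both copies by the same Poisson measure $\mathcal{P}$ and expect their post-synaptic jump times to coincide, because the accepted atoms of $\mathcal{P}$ depend on the copy's own trajectory. The ``quiet window'' trick sidesteps this by \emph{avoiding} rather than matching spike events, relying only on $\sup_K\beta<\infty$ to lower-bound the probability of a no-spike event and on the positivity of every $\gamma_i$ to make the deterministic flow of $Z^w$ actually contract during the window.
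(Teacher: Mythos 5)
Your proposal targets the right general statement, but note first that the paper itself does not prove this proposition: it is quoted from Section~5 of \citet{robert_stochastic_2020}, and the only proof given in-house is for the discrete analogue, Proposition~\ref{InvPropFPD}, with the remark that the continuous proof involves ``annoying technical intricacies''. Measured against the assumptions actually in force, your existence step has a genuine gap. In the generator $B^F_w$ of Relation~\eqref{BFGen}, the post-synaptic jump of $z$ enters \emph{multiplied by} $\beta(x)$, and Assumption~A-a does not make $\beta$ bounded (only continuous, non-negative, vanishing below ${-}c_\beta$). Your estimate treating the jump contributions to the $z$-part as ``a constant plus terms linear in $\|z\|$'' silently drops this factor. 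The compensation you hope for from the $x$-jump can be absent: Assumption~A-b permits $g\equiv 0$, in which case $\beta(x)\bigl((x{-}g(x))^2{-}x^2\bigr)=0$, and then with, say, $\beta(x)=x^3$ and $k_2\equiv C_2e_1$, $C_2{>}0$, one gets along the ray $z=xe_1$, $x\to\infty$, a positive term $\beta(x)(2C_2x{+}C_2^2)\sim 2C_2x^4$ against negative terms of order $x^2$, so $B^F_w(V)\le {-}\eta V{+}C$ fails outside \emph{every} compact set for $V(x,z)=x^2{+}\|z\|^2$. The discrete proof of Proposition~\ref{InvPropFPD} shows exactly where the balance lies: there the Lyapunov function is linear, $f(x,z)=x{+}\eta(z_1{+}\cdots{+}z_\ell)$, and the dangerous term $\eta\ell\beta C_k x$ (the $z$-jumps occurring at rate $\beta x$) is absorbed by the $-\beta x$ produced by the unit potential drop $g\equiv 1$, choosing $\eta{<}1/(\ell C_k)$; your quadratic $V$ has no analogous cancellation, and at this level of generality none is available. (Minor: the pre-synaptic jump contributes $\lambda(2wx{+}w^2)$ to $B^F_w(V)$, which is linear in $x$, not ``bounded''; that one is harmless.)

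The uniqueness step also fails as written. Between jumps the process is a \emph{deterministic} flow, so conditioning the time-$T$ kernel on your quiet window yields a Dirac mass at the flow image $\phi_T(u)$ of the initial state $u$ --- for instance the first coordinate is exactly $xe^{-T}$, which depends injectively on $x$. Two distinct initial conditions in $K$ therefore produce mutually singular conditional laws; the fact that both Diracs lie within $\varepsilon$ of $(0,k_0/\gamma)$ does not yield a common component, hence no minorization by a common reference measure and no Doeblin condition. What the quiet window really proves is accessibility: every neighborhood of $(0,k_0/\gamma)$ is charged by any invariant measure. To get uniqueness one still needs an honest coupling --- either exact coalescence exploiting some reset structure (e.g.\ $g(x)=x$, or $k_i$ of reset form $B{-}z$), or an asymptotic/Wasserstein-contraction argument --- and this is precisely where the state-dependent rate $\beta(X^w)$, the obstacle you correctly identify, re-enters rather than being sidestepped. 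In short: the Foster--Lyapunov skeleton is the right instinct (it is what the paper's discrete proof uses, via Proposition~8.14 of \citet{robert_stochastic_2003}), but both your drift inequality and your regeneration step break under the stated hypotheses.
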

The explicit expression of $\Pi_w$ is not known in general. For several STDP models, like calcium-based models, this is a limitation for a detailed analysis of the plasticity process $(W(t))$. See Section~4 of~\cite{robert_stochastic_2020_2}. The next section is devoted to a class of discrete models of synaptic plasticity  for which the corresponding $\Pi_w$ has an explicit expression for the analogue of  calcium-based models.

\section{Discrete Models of  STDP Rules}
\label{ap:stochqueu}

In this section, we introduce a discrete model of plasticity associated to Relation~\eqref{eq:markov}, where  the membrane potential $X$, the cellular processes  $Z$ and the synaptic weight $W$ are integer-valued variables. It amounts to represent these  quantities as multiple of a ``quantum'',  instead of a continuous variable.
For example, pre-/post-synaptic receptors (like the AMPA receptor for example) have a measurable influence on the membrane potential, where one quantum would represent the influence of a single receptor.
This is a biologically plausible assumption for potential and cellular processes.  The leaking mechanism (${-}a U(t){\diff}t$ in the continuous model, $U{\in}\{X,Z,W\}$ and $a{>}0$, in the SDEs) is represented by the fact that each quantum leaves the cell/synapse at rate $a$.

\begin{equation}\label{eq:discrete}
\begin{cases}
 \diff X(t) &= \displaystyle{-}{\cal N}_{I,X}(\diff t) +W(t{-})\mathcal{N}_{\lambda}(\diff t)-{\cal N}_{I,\beta X}(\diff t),\\
\diff Z_j(t) &\displaystyle {=}   {-}{\cal N}_{I,\gamma_j Z_j}(\diff t)+k_{0,j}(Z(t{-})){\cal N}_{1}^{j}(\diff t){+}k_{1,j}(Z(t{-}))\mathcal{N}_{\lambda}(\diff t)\\ &\displaystyle\hspace{1cm}+k_{2,j}(Z(t{-})){\cal N}_{I,\beta X}(\diff t),\quad j{=}1,\ldots,\ell,\\
 \diff W(t) &=\displaystyle -{\cal N}_{I,\mu W}(\diff t) {+}A_p{\cal N}_{I,\Omega_{p}}(\diff t){-}A_d\ind{W(t-){\ge}A_d}{\cal N}_{I,\Omega_{d}}(\diff t).
\end{cases}
\end{equation}
The processes $(\Omega_{a}(t))$, $a{\in}\{p,d\}$ satisfy the same SDE as in Relation~\eqref{eq:markov}, the functions $n_{a,i}$ and $k_i$, $i\in\{0,1,2\}$  are defined on $\N^\ell$ with values in $\N^\ell$. The variables $A_p$ and $A_d$ are integers and $\gamma{=}(\gamma_j){\in}\R_+^\ell$. 

For $\xi{>}0$, ${\cal N}_\xi$, resp. $({\cal N}_{\xi}^i)$ , is a Poisson process on $\R_+$ with rate $\xi$, resp. independent i.i.d. sequences of such point processes.
As before, with Relation~\eqref{Nbeta} and $I(x){=}x$ and a process $(U(t))$, the notation ${\cal N}_{I,U}(\diff t)$ stands for ${\cal P}\left((0,U(t-)),\diff t\right)$, where ${\cal P}$ is a Poisson process in $\R_+^2$ with rate $1$. We have in particular $$\P({\cal N}_{I,U}(\diff t){\ne}0{\mid}U(t{-})){=}U(t{-})\diff t{+}o(\diff t).$$ All Poisson processes are assumed to be independent.

We have taken $g(\cdot)$ as the constant function equal to $1$. As it can be seen, the firing rate in the evolution of $(X(t))$ is the linear function $x{\mapsto}\beta x$. The time evolution of the discrete random variable $(W(t))$ is driven by two inhomogeneous Poisson processes, one for potentiation and the other for depression.

As before we define $(X^w(t),Z^w(t))$ as the Markov process $(X(t),Z(t))$ when $(W(t))$ is constant and equal to $w{\in}\N$. 
If $Q{=}(q((x,z),(x',z')))$ is the jump matrix of $(X^w(t),Z^w(t))$, we have,
\[
\begin{cases}
  q((x,z),(x{-}1,z)){=} x,\;\;  q((x,z),(x,z{+}k_0(z))){=} 1\\
 q((x,z),(x{+}w,z{+}k_1(z))){=} \lambda,\;\;  q((x,z),(x{-}1,z{+}k_2(z))){=}\beta x,\\
 q((x,z),(x,z{-}e_i)){=}\gamma_iz_i, \quad i{\in}\{1,\ldots,\ell\}.
\end{cases}
\]
where $e_i$ is the $i$th unit vector of $\N^\ell$. If $f$ is a function on $\N{\times}\N^\ell$,  with the notation $\nabla_{(a,b)}(f)(v){=}(f(v{+}(a,b)){-}f(v))$, for $v$, $(a,b){\in}\Z^{\ell+1}$, $Q$ can be expressed as 
\begin{multline*}
Q(f)(x,z)\steq{def}\sum_{(x',z')} q((x',z'),(x,z))f(x',z')\\{=} x\nabla_{(-1,0)}(f)(x,z){+}\sum_{j=1}^\ell \gamma_j z_j\nabla_{(0,-e_j)}(f)(x,z){+}\nabla_{(0,k_0(z))}(f)(x,z)\\{+}\lambda\nabla_{(w,k_1(z))}(f)(x,z){+}\beta x \nabla_{(-1,k_2(z))}(f)(x,z).
\end{multline*}

\begin{proposition}\label{InvPropFPD}
Under Assumptions~A-a and~A-b and if the coordinates of functions $k_0$, $k_1$ and $k_2$ are bounded and all coordinates of $\gamma$ are positive then the Markov process $(X^w(t),Z^w(t))$ has a unique invariant distribution on $\N^{1{+}\ell}$.
\end{proposition}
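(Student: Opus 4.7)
The plan is to establish existence of an invariant distribution via a Foster--Lyapunov drift criterion applied to the jump generator $Q$, and then obtain uniqueness from irreducibility of the chain on its reachable set. The process $(X^w(t),Z^w(t))$ is a continuous-time Markov chain on the countable state space $\N^{1{+}\ell}$ with bounded increments (the $X$-jumps lie in $\{-1,0,w\}$, the $Z$-jumps in $\{-e_j, k_0(z), k_1(z), k_2(z)\}$, each of norm at most some constant $K$ by the boundedness assumption on the $k_i$), so the standard machinery for countable-state chains is available.

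The first step is to produce a linear Lyapunov function. Trying $V(x,z) = x + C(z_1 + \cdots + z_\ell)$ with $C>0$ to be chosen, a direct computation from the expression of $Q$ gives
\begin{align*}
Q V(x,z) &= -(1+\beta)x + \lambda w - C \sum_{j=1}^\ell \gamma_j z_j \\
&\quad + C \sum_{j=1}^\ell k_{0,j}(z) + C\lambda \sum_{j=1}^\ell k_{1,j}(z) + C\beta x \sum_{j=1}^\ell k_{2,j}(z).
\end{align*}
Using $\sup_z \sum_j k_{i,j}(z) \le \ell K$ and picking $C$ small enough that $C\beta\ell K < 1+\beta$, the coefficient of $x$ becomes strictly negative. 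Positivity of every coordinate of $\gamma$ then yields a geometric drift condition
\[
Q V(x,z) \le -\delta V(x,z) + B
\]
for suitable $\delta>0$ and $B<\infty$. This estimate simultaneously rules out explosion and, by Foster's criterion, gives positive recurrence on the reachable class and hence existence of an invariant probability measure.

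The step I expect to be the main obstacle is uniqueness, since the drift inequality alone does not rule out several invariant distributions supported on disjoint communicating classes. I would argue irreducibility on the reachable set by exhibiting explicit paths of positive probability: the leaks (rates $x$ for $X$ and $\gamma_j z_j$ for each $Z_j$) allow monotone descent of every coordinate to a reference state near the origin, while the pre-synaptic jumps (rate $\lambda>0$), the post-synaptic jumps (rate $\beta x$ whenever $x>0$) and the rate-$1$ background process producing the $k_0$-increments allow climbing back up. The positivity of the background rate $1$ driving the $k_0$-term of Relation~\eqref{eq:discrete} will be crucial here: without it, coordinates of $Z$ for which $k_1$ and $k_2$ vanish could remain stuck at $0$, breaking irreducibility. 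Once irreducibility on the reachable class is secured, positive recurrence forces the invariant distribution on that class to be unique, which is the statement of the proposition.
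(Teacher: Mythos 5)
Your proposal is correct and follows essentially the same route as the paper: the paper's proof uses the identical linear Lyapunov function $f(x,z)=x+\eta(z_1+\cdots+z_\ell)$ with $\eta<1/(\ell C_k)$, performs the same drift computation for $Q(f)$ (your exact expression for $QV$ matches), and concludes by a Foster--Lyapunov criterion, citing Proposition~8.14 of \citet{robert_stochastic_2003}. The only difference is one of completeness, in your favor: you derive a geometric drift $QV\le -\delta V+B$ (the paper only needs $Q(f)\le-\gamma$ off a finite set) and you explicitly sketch the reachability/irreducibility argument behind uniqueness, a step the paper leaves entirely to the cited criterion.
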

\begin{proof}
  Since the state space is at most countable, the proof is simpler than its continuous counterpart, Proposition~\ref{InvPropFPD}, where annoying technical intricacies hide the simplicity of the result.  Let $C_k$ be an upper bound for the coordinates of $k_i$, $i{=}0$, $1$, $2$. For $(x,z){\in}\N^{\ell+1}$, define, for $\eta{>}0$, $f(x,z){=}x{+}\eta(z_1{+}\cdots{+}z_\ell)$. We have
\begin{multline*}
Q(f)(x,z) \le {-}x+\lambda w {-}\beta x-\eta \sum_{j=1}^\ell \gamma_j z_j +\eta \ell (1{+}\lambda)C_k+\eta \ell \beta C_k x
\\\le {-}\beta(1{-}\eta\ell C_k)x {-}\min(1,\gamma_j) f(x,z){+}D,
\end{multline*}
with $D$ a constant. If $\eta$ is chosen so that $\eta{<}1/\ell C_k$, then there exists $\gamma{>}0$ and $K$ such that $Q(f)(x,z){<}{-}\gamma$ holds whenever $f(x,z){>}K$. We can now use Proposition~8.14 of~\cite{robert_stochastic_2003} to conclude the proof of the proposition.
\end{proof}
\subsection*{A Discrete Version of Calcium-Based Models}
\label{secsec:calciumqueue}
A comparison between continuous and discrete models of calcium-based STDP is presented in Section~\ref{ap:figures}, and illustrated by Figure~\ref{figS:calciumbased}.  The state of the system corresponds to the case when $(Z(t))$ is a one-dimensional process $(C(t))$ solution of the SDE,
\[
\begin{cases}
\quad  \diff C(t) &= \displaystyle -\mathcal{N}_{I,\gamma C}(\diff t)+ C_1\mathcal{N}_{\lambda}(\diff t)+C_2\mathcal{N}_{I,\beta X}(\diff t),\\
\quad \diff \Omega_{a}(t) &=\displaystyle
        \left(-\alpha\Omega_{a}(t) {+} h_a(C(t))\right)\diff t,\quad a{\in}\{p,d\},\\
\end{cases}
\]
where  $C_1$, $C_2{\in}\N$ and, for $a{\in}\{p,d\}$, $A_a{\in}\N$ and $h_a$ is a non-negative function.

\begin{definition}\label{FVD}
For a fixed $W{=}w$, the Markov process $(X^w(t),C^w(t))$ is defined by its transition rate matrix $Q_C{=}(q_C((x,c),(x',c')))$ is given by, for $(x,c){\in}\N^2$,
\[
\begin{cases}
q_C((x,c),(x{+}w,c{+}C_1)){=}\lambda, \qquad  q_C((x,c),(x{-}1,c)){=}x,\\
q_C((x,c),(x,c{-}1)){=}\gamma c, \qquad q_C((x,c),(x{-}1,c{+}C_2)){=}\beta x.
\end{cases}
\]
\end{definition}

\begin{figure}[ht]
\fontsize{9pt}{8pt}\selectfont
\centering{  \makebox[\textwidth][c]{
\scalebox{0.8}{
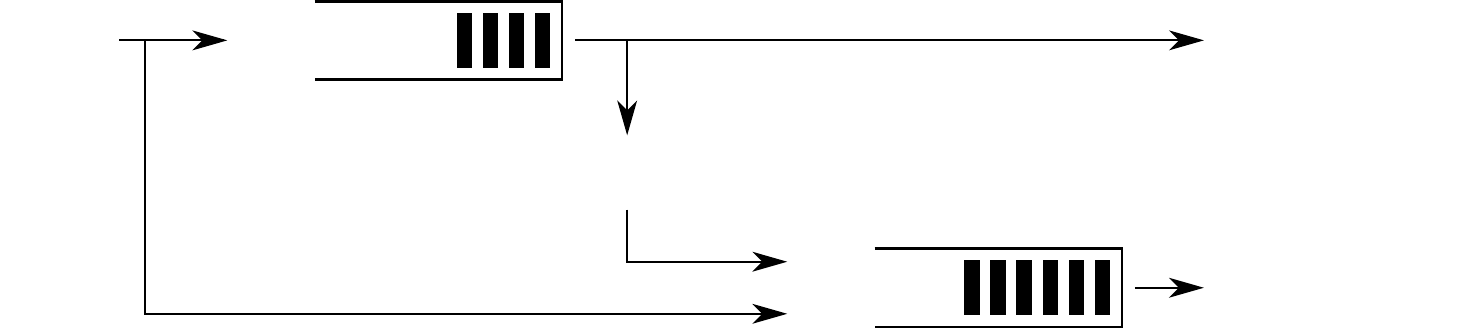}
  }
  }
\caption{Stochastic Queue for the Associated Fast Process of the Discrete Calcium-Based Model}
\label{figS:calciumsq}
\end{figure}

This process can be seen as a network of two $M/M/\infty$ queues with simultaneous arrivals,
see Chapter~6 of~\citet{robert_stochastic_2003}, as illustrated in Figure~\ref{figS:calciumsq}.


\begin{proposition}[Equilibrium of Fast Process] \label{EFPD}
For $w{\in}\N$, the Markov process on $\N^2$ of Definition~\ref{FVD} has a unique invariant distribution $\Pi^{\textup{CQ}}_w$, and the generating function of $C^w$ is given by, for $u{\in}[0,1]$,
\begin{equation}\label{GenC}
    E\left(u^{C^w}\right)=
    \exp\left({-}\lambda\int_{0}^{+\infty}\left(1{-}\Delta(u,s,w)\right)\diff s\right),
\end{equation}
with
\[
\Delta (u,s,w) = \left(\rule{0mm}{4mm}1{+}(u{-}1)p_1(s)\right)^{C_1}\left(1{+}\sum_{i=1}^{C_2}(u{-}1)^k p_2(s,k)\right)^w
\]
\[
p_1(s)=e^{-\gamma s}\text{ and }
  p_2(s,k) = \frac{\beta}{\beta{+}1{-}\gamma k} \binom{C_2}{k}\left(e^{-\gamma k s}{-}e^{-(\beta+1)s}\right).
\]
\end{proposition}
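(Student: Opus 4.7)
The plan splits naturally into two parts: existence and uniqueness of $\Pi^{\textup{CQ}}_w$, and the identity for the generating function of $C^w$. The first part follows immediately from Proposition~\ref{InvPropFPD} applied with $\ell{=}1$, $k_0{\equiv}0$, $k_1{\equiv}C_1$, $k_2{\equiv}C_2$ and $\gamma_1{=}\gamma$; these choices satisfy the boundedness/positivity hypotheses and the Foster-type Lyapunov argument there yields a unique invariant distribution on $\N^2$.

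For the generating function, I would use a ``Poisson rain'' representation in stationarity. The key structural observation is that the system of Figure~\ref{figS:calciumsq} is a linear network of $M/M/\infty$-type nodes: each unit of $X$ leaks at rate $1$ and spikes at rate $\beta$ independently of the others, each unit of $C$ decays at rate $\gamma$ independently, and a spike of an $X$-unit injects $C_2$ independent calcium units. Consequently, the contributions to $C(0)$ from disjoint arrival batches are independent, and I would exploit this linearity by conditioning on the arrivals of $\mathcal{N}_\lambda$ over $({-}\infty,0]$, which in stationarity form a homogeneous PPP of intensity $\lambda$.

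For a single batch arriving at time $-s$, the $C_1$ calcium units each survive to time $0$ independently with probability $p_1(s){=}e^{-\gamma s}$, so their joint contribution has generating function $(1{+}(u{-}1)p_1(s))^{C_1}$. Each of the $w$ $X$-units has total lifetime $\text{Exp}(1{+}\beta)$, with competing causes leak (rate $1$) and spike (rate $\beta$); only a spike contributes, injecting $C_2$ calcium units that each survive with a probability depending on the spike time. Partitioning on the three mutually exclusive outcomes ``still in $X$'', ``leaked before $0$'', ``spiked at some time $-v$'', the single-unit generating function is
\[
E[u^Y]=e^{-(1+\beta)s}+\int_0^s e^{-(1+\beta)a}\,da+\int_0^s \beta e^{-(1+\beta)(s-t)}\bigl(1+(u{-}1)e^{-\gamma t}\bigr)^{C_2}\,dt.
\]
Expanding the binomial in the last integral and integrating termwise reduces this to $1+\sum_{k=1}^{C_2}(u{-}1)^k p_2(s,k)$ with $p_2$ as in the statement, and independence across the $w$ units produces the factor $(E[u^Y])^w$. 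Multiplying the two factors yields $\Delta(u,s,w)$ as the conditional generating function of a single batch.

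Finally, applying the Campbell/Laplace-functional formula to the PPP of arrival times with its i.i.d.\ marks yields
\[
E\bigl[u^{C^w}\bigr]=\exp\Bigl({-}\lambda\int_0^\infty\bigl(1-\Delta(u,s,w)\bigr)\,ds\Bigr),
\]
which is the stated identity; the integral converges because $1{-}\Delta$ decays exponentially in $s$. The main technical point I expect to require care is the single-unit integral: one must handle by continuity (or L'H\^opital) the degenerate values of $k$ for which $1{+}\beta{=}\gamma k$, where the denominator in $p_2(s,k)$ vanishes but the stated formula remains valid as a limit. A secondary point is the rigorous justification of the Poisson-rain construction, which relies on extending the driving Poisson processes to the whole real line in stationarity and on the independence of fates of distinct particles, itself a consequence of the linear network structure and the Markov property.
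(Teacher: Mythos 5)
Your proposal is correct and follows essentially the same route as the paper: the paper likewise obtains existence/uniqueness from the discrete Lyapunov result (Proposition~\ref{InvPropFPD}) and derives Relation~\eqref{GenC} by representing the stationary state as a functional of a marked Poisson point process on $({-}\infty,0]$, whose i.i.d.\ marks are the exponential lifetimes and firing times of the individual quanta, and then applying the Laplace-functional (Campbell) formula. Your per-batch computation --- the race at rate $1{+}\beta$ between leak and spike followed by binomial thinning of the $C_2$ calcium quanta, expanded termwise to get $p_2(s,k)$ --- is exactly the paper's evaluation via $p(s)$ and $q(s,E_{\beta,1})$, merely organized as a single conditional expectation.
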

Due to its use in the scaling results of~\citet{robert_stochastic_2020_2}, only the distribution of the calcium variable $C^w$ is considered. The joint generating function of $(X^w,C^w)$ could be obtained with the same approach. 

It might be tempting to try to solve the equilibrium equations for the transition rates of Definition~\ref{FVD}. It does not seem that there is a way to solve them with generating functions methods. The proof below relies in fact on a convenient representation of the Markov process with a Poisson marked point process, it then gives a satisfactory representation of the equilibrium distribution.
\begin{proof}
To each arrival instant $t$ of the Poisson process ${\cal N}_\lambda$ on $\R$ is associated a vector of $\N^{2w+C_1+wC_2}$
$$\underline{u}{=}((x_i,1{\le}i{\le}w),(y_i,1{\le}i{\le}w),(z_{0,j},1{\le}j{\le}C_1),(z_{i,j},1{\le}i{\le}w,1{\le}j{\le}C_2))$$
  We take $(\underline{U}_n){=}((X_{n,i}),(Y_{n,i}),(Z_{n,i,j}))$, where $(X_{n,i})$, $(Y_{n,i})$ and $(Z_{n,i,j})$, sequences of i.i.d. exponentially distributed random variables with respective parameters $1$, $\beta$ and $\gamma$, and independent of ${\cal N}_\lambda$. The interpretation of these variables are as follows, for $1{\le}i{\le}w$, for the $n$th instant of the Poisson process ${\cal N}_\lambda$, 
  \begin{enumerate}
  \item $X_{n,i}$ is the lifetime of the $i$th quantum of potential generated at time $t$ (if any),
  \item $Y_{n,i}$, the duration of time after which this $i$th quantum of potential initiates a firing of the neuron,
  \item $Z_{n,0,j}$, the lifetime of the $j$th quantum of calcium generated at $t$, for $1{\le}j{\le}C_1$,
  \item $Z_{n,i,j}$,  the lifetime of the $i$th quantum of calcium created if the event described by (c) occurs, for $1{\le}j{\le}C_2$.
  \end{enumerate}
Define
  \[
\overline{\cal N}_\lambda(\diff s,\diff \underline{u})\steq{def}\sum_{n{\in}\Z}\delta_{(t_n,\underline{U}_n)},
\]
it is well known that $\overline{\cal N}_\lambda$ is a Poisson marked point process with intensity measure
\begin{equation}\label{IntPoisM}
\mu(\diff s,\diff \underline{u})\steq{def}\lambda\diff s \otimes_{i=1}^w E_{1}(\diff x_i) \otimes_{i=1}^w  E_{\beta}(\diff y_i)\otimes_{j=1}^{C_1} E_{\gamma}(\diff z_{0,j}) \otimes_{i=1}^w\otimes_{j=1}^{C_2} E_{\gamma}(\diff z_{i,j}),
\end{equation}
where $E_\xi(\diff x)$ is the exponential distribution with parameter $\xi{>}0$. See Chapter~5 of \citet{kingman_poisson_1992} for example.

Assuming that $X^w(0){=}C^w(0){=}0$,  with the interpretation of the coordinates of  the mark  $\underline{u}$, it is easy to get the representation, for $t{\ge}0$,
  \[
  X^w(t)=\int_{(0,t]}\sum_{i=1}^w \ind{s+x_i>t,s+y_i>t}\overline{\cal N}_\lambda(\diff s,\diff \underline{u}),
    \]
indeed, if there is an arrival at $s{\le}t$,  its $i$th quantum $i{\in}\{1,\ldots,w\}$ of this arrival with lifetime $x_i$ with firing time $y_i$ is still present at $t$ if $s{+}x_1{>}t$ and  $s{+}y_i{>}t$. Similarly, 
\begin{align*}
  C^w(t) =\int_{(0,t]}\sum_{j=1}^{C_1} \ind{s+z_{0,j}>t}&\overline{\cal N}_\lambda(\diff s,\diff \underline{u})\\
    &+\int_{(0,t]}\sum_{i=1}^w \sum_{j=1}^{C_2} \ind{x_i>y_i,s+y_i<t,s+y_i+z_{i,j}>t}\overline{{\cal N}}_\lambda(\diff s,\diff \underline{u}).
\end{align*}
Using invariance by time-translation of the Poisson process $\overline{{\cal N}}_\lambda$, we get that the random variable $(X^w(t),C^w(t))$ has the same distribution as
\begin{multline*}
\left(\overline{X}^w(t),\overline{C}_w(t)\right)\steq{def}  \left(\int_{(-t,0]}\sum_{i=1}^w \ind{s+x_i>0,s+y_i>0}\overline{\cal N}_\lambda(\diff s,\diff \underline{u})\right.,\\
\int_{(-t,0]} \sum_{j=1}^{C_1}\ind{s+z_{0,j}>0}\overline{\cal N}_\lambda(\diff s,\diff \underline{u})
\left.   {+}\sum_{i=1}^{w} \sum_{j=1}^{C_2} \ind{\substack{x_i>y_i,s+y_i<0,\\s+y_i+z_{i,j}>0}}\overline{{\cal N}}_\lambda(\diff s,\diff \underline{u})\right).
\end{multline*}
The random variables $(\overline{X}^w(t),\overline{C}^w(t))$ are non-decreasing and converging to 
\begin{multline}
\left(\overline{X}^w(\infty),\overline{C}_w(\infty)\right)\steq{def}\left(\int_{(-\infty,0]}\sum_{i=1}^w \ind{s+x_i>0,s+y_i>0}\overline{\cal N}_\lambda(\diff s,\diff \underline{u})\right.,\\
\left.\int_{(-\infty,0]} \left[\sum_{j=1}^{C_1}\ind{s+z_{0,j}>0}+\sum_{i=1}^w \sum_{j=1}^{C_2}\ind{x_i>y_i,s+y_i<0,s+y_i+z_{i,j}>0}\right]\overline{\cal N}_\lambda(\diff s,\diff \underline{u})\right).
\end{multline}
The variable $\overline{X}^w(\infty)$ and $\overline{C}_w(\infty)$ are almost surely finite since, with standard calculations with Poisson processes, we obtain that
\[
\E\left[\overline{X}^w(\infty)\right]=\frac{\lambda}{\beta{+}1}w, \quad
\E\left[\rule{0mm}{4mm}\overline{C}_w(\infty)\right]= \frac{\lambda}{\gamma}\left(C_1{+}
C_2\frac{\beta w}{\beta{+}1}\right).
\]
Recall the formula for Laplace transform of Poisson point processes,
\[
\E\left[\exp\left(\int {-}f(s,\underline{u})\overline{\cal N}_\lambda(\diff s,\diff \underline{u})\right)\right]=
\exp\left(\int \left(1-e^{-f(s,\underline{u})}\right)\mu(\diff s,\diff \underline{u})\right),
\]
for any non-negative Borelian function $f$ on $\R_+^{2w+C_1+wC_2}$, where $\mu$ is defined by Relation~\eqref{IntPoisM}.
See Proposition~1.5 of~\citet{robert_stochastic_2003} for example. For $u{\in}[0,1]$, we therefore get the relation
\begin{multline*}
  {-}\ln\E\left[u^{\overline{C}_w(\infty)}\right]=\\
  \lambda\int_{\R_+}\left(1{-}\E\left[u^{\sum_{j=1}^{C_1}\ind{E_{\gamma,0,j}{>}s}}\right]\E\left[u^{\sum_{i=1}^w\sum_{j=1}^{C_2} \ind{E_{1,i}{>}E_{\beta,i},E_{\beta,i}{<}s{<}E_{\beta,i}{+}E_{\gamma,i,j}}}\right]\right)\diff s=\\
        \lambda\int_{\R_+}\left(1{-}\left(1{-}e^{-\gamma s}{+}ue^{-\gamma s}\right)^{C_1}\E\left[u^{\ind{E_{\beta,1}{<}s{\wedge}E_{1,1}}\sum_{j=1}^{C_2}\ind{s{<}E_{\beta,1}{+}E_{\gamma,1,j}}}\right]^w\right)\diff s,
\end{multline*}
where $(E_{1,i})$, $(E_{\beta,i})$ and $(E_{\gamma,i,j})$ are independent  i.i.d. exponentially distributed random variables with respective parameters $1$, $\beta$ and $\gamma$. We have 
\begin{multline*}
\E\left[u^{\sum_{j=1}^{C_2}\ind{E_{1,1}{>}E_{\beta,1},E_{\beta,1}{<}s{<}E_{\beta,1}{+}E_{\gamma,1,j}}}\right]=\\
1 {-} p(s){+}\E\left[\E\left[1 {-} q(s,E_{\beta,1}) {+} uq(s,E_{\beta,1})\right]^{C_2}\ind{E_{\beta,1}{<}s{\wedge}E_{1,1}}\right]
\end{multline*}
with 
\[
p(s)=\P\left(\rule{0mm}{4mm}E_{\beta,1}{<}E_{1,1}{\wedge}s\right)=\frac{\beta}{\beta+1}\left(1{-}e^{-(\beta+1)s}\right),
\]
and
\[
q(s,E_{\beta,1})=\P\left(\rule{0mm}{4mm}s{-}E_{\beta,1}{<}E_{\gamma,1,1}\middle \vert E_{\beta,1}\right)=e^{-\gamma (s-E_{\beta,1})},
\]
\begin{multline*}
\E\left[\E\left[1 {-} q(s,E_{\beta,1}) {+} uq(s,E_{\beta,1})\right]^{C_2}\ind{E_{1,1}{>}E_{\beta,1},E_{\beta,1}{<}s}\right]=\\
\sum_{k=0}^{C_2}(u{-}1)^k\left[\beta \binom{C_2}{k} e^{-\gamma k s}\int_0^se^{-(\beta+1-\gamma k)h}\diff h\right]=
\sum_{k=0}^{C_2}(u{-}1)^k p_2(s,k)
\end{multline*}
with,
\[
  p_2(s,k) = \frac{\beta}{\beta+1-\gamma k} \binom{C_2}{k}\left(e^{-\gamma k s}{-}e^{-(\beta+1)s}\right)
\]
Note that, $p_2(s,0){=}p(s)$
the proposition is thus proved.
\end{proof}

\printbibliography

\newpage

\appendix

\section{Additional Examples of Plasticity Kernels}\label{AsecMod}
\subsection{\sc Suppression Models}
\label{secsecsec:STDPSup}

Computational models of pair-based  rules of  Section~\ref{secsecsec:pairbased} are easy to implement in large neural networks and they capture some essentials properties of STDP.

Nevertheless, they have been shown to fit poorly with experimental data when more complex protocols are used. See~\citet{froemke_spike-timing-dependent_2002,pfister_triplets_2006}.
For this reason, more detailed models taking into account the influence of several pre- and post-synaptic spikes have been proposed.
\citet{babadi_stability_2016} is a  review of these so-called `triplet-based' rules and their influence on the stability of the synaptic weights distribution.
The model of this section  is a variant of the pair-based model with an additional dependence on earlier instants of post- and pre-synaptic spikes.
Another variant is described in Section~\ref{secsecsec:Triplet}.

It was observed, using triplet-based protocols in~\citet{froemke_spike-timing-dependent_2002}, that preceding pre- and post-synaptic spikes have a `suppression' effect on the Hebbian STDP observed.
Motivated by these experiments, the following model, extending pair-based rules, has been proposed.

If there is a pre-synaptic spike, resp. post-synaptic spike, at time $t{\ge}0$,  we denote by $\ell_1(t)$ [resp. $\ell_2(t)$]  the instant of the last pre-synaptic [resp.  post-synaptic spike], before $t$. For this model, when a pre-synaptic spike occurs at time $t{\ge}0$, the contribution to $\Gamma_a^{\textup{S}}(\cdot,\cdot)(\diff t)$ is the sum over all post-synaptic spikes before time $s{\le}t$ of the quantities
\[
\left(1{-}\Phi_{\textup{S},1} (t{-}\ell_1(t))\right)\left(1{-}\Phi_{\textup{S},2} (s{-}\ell_2(s))\right) \Phi_{a,2}(t{-}s),
\]
and similarly for post-synaptic spikes, where $\Phi_{\textup{S},i}$ is a non-negative non decreasing function verifying $\Phi_{\textup{S},i}(0){\leq}1$ and $\lim_{t\rightarrow+\infty}\Phi_{\textup{S},i}(t){=}0$, for $i{\in}\{1,2\}$.
In particular, if the  instants $t_1$ and $t_2$ of consecutive pre-synaptic spikes  are too close, i.e. $t_2{-}t_1{=}t_2{-}\ell_1(t)$ is small, the synaptic weight is not significantly changed at the instant $t_2$.  And similarly for consecutive post-synaptic spikes.

The plasticity kernels $\Gamma_a^{\textup{S}}$, $a{\in}\{p,d\}$, are defined by, for $m_1$, $m_2{\in}{\cal M}_p(\R_+)$,
\begin{multline*}
  \Gamma_{a}^{\textup{S}}(m_1,m_2)(\diff t) \steq{def}\\
\left[\left(1{-}\Phi_{\textup{S},1}(t_0(m_1,t)\right) \, \int_{(0,t)}\left(1{-}\Phi_{\textup{S},2}( t_0(m_2,s))\right)\Phi_{a,2}(t{-}s)m_2(\diff s)\right]\,m_1(\diff t)\\
  + \left[\left(1{-}\Phi_{\textup{S},2}(t_0(m_2,t)\right)\ \int_{(0,t)}\left(1{-}\Phi_{\textup{S},1}(t_0(m_1,s))\right)\Phi_{a,1}(t{-}s)m_1(\diff s)\right]\,m_2(\diff t)
\end{multline*}
with the $t_0(m,t)$ defined by Equation~\eqref{eq:t0}.

\subsection{\sc Triplet-Based Models}
\label{secsecsec:Triplet}

\citet{pfister_triplets_2006} shows that preceding pre-synaptic spikes enhance the depression obtained for a \emph{post-pre} pairing, whereas preceding post-synaptic spikes lead to a bigger potentiation than in a classical pre-post pairing. The  plasticity kernels $\Gamma_a^{\textup{T}}$, $a{\in}\{p,d\}$ of the associated model are defined by, for $m_1$,  $m_2{\in}{\cal M}_p(\R_+)$,
\begin{multline}
\label{eq:FPaT}
  \Gamma_{a}^{\textup{T}}(m_1,m_2)(\diff t)\steq{def}\\ \left(1{+}\int_{(0,t)}\Phi_{\textup{T},a,1}(t{-}s)m_1(\diff s)\right)\left(\int_{(0,t)}\Phi_{a,2}(t{-}s)m_2(\diff s)\right)\,m_1(\diff t)
\\{+} \left(1{+}\int_{(0,t)}\Phi_{\textup{T},a,2}(t{-}s)m_2(\diff s)\right)\left(\int_{(0,t)}\Phi_{a,1}(t{-}s)m_1(\diff s)\right)\,m_2(\diff t).
\end{multline}
where, for $a{\in}\{p,d\}$, $i{\in}\{1,2\}$, $\Phi_{\textup{T},a,i}$ is
a non-negative non-decreasing function converging to $0$ at infinity. 

It is interesting to note that this model is in contradiction with the suppression model described just before. Both models are based on experimental data from different neuronal cells: visual cortical in~\citet{froemke_spike-timing-dependent_2002}, and hippocampal in~\citet{pfister_triplets_2006}.
A global model taking into account both mechanisms, the \emph{NMDA-model}, is defined in~\citet{babadi_stability_2016}.

\subsection{\sc Voltage-Based Models}
\label{secsecsec:STDPV}

In~\citet{clopath_voltage_2010}, another class of plasticity rules,  voltage-based models, has been used to explain plasticity with biophysical mechanisms, similarly to  calcium-based models.

In particular, filtered traces of the membrane potential $X$ are used in the synaptic update.
Adapting notations from~\citet{clopath_voltage_2010}, we have for depression,

\[
    \Gamma_d(\diff t) = \left[B_d\left(\int_{(0,t)} e^{{-}\gamma_{d,2}(t{-}s)}X(t-s)\diff s{-} \theta_{d}\right)^+\right]{\cal N}_{\lambda}(\diff t),
\]
and for potentiation,
\begin{multline*}
    \Gamma_p(\diff t) = B_p\left(\int_{(0,t)} e^{{-}\gamma_{p,0}(t{-}s)}X(t{-}s)\diff s{-} \theta_{p}\right)^+\\
    {\times}\left(\int_{(0,t)} e^{{-}\gamma_{p,2}(t{-}s)}X(t{-}s)\diff s{-} \theta_{d}\right)^+\\
    {\times}\left(\int_{(0,t)} e^{-\gamma_{p,1}(t-s)}\mathcal{N}_{\lambda}(\diff s)\right)\diff t.
\end{multline*}
See Relations~(1) and~(2) of \citet{clopath_voltage_2010}.

In their model, an adaptive-exponential integrate-and-fire model (AdEx) is used to represent the post-synaptic neuron, instead of a Poisson point process.
They  take $\theta_p$ above the threshold potential of the AdEx model, leading to a simple estimation in terms of the post-synaptic spike train:
\[
    \left(\int_{(0,t)} e^{{-}\gamma_{p,0}(t{-}s)}X(t-s)\diff s{-} \theta_{p}\right)^+\diff t \sim {\cal N}_{\beta, X}(\diff t).
\]
However, $\theta_d$ lies around the resting potential of the neuron, leading to synaptic update that are functions of $X$ directly and not only of the spike-trains.
This feature justifies the denomination {\em voltage-based} models and is not easily taken into account in the framework presented here.
To include such a STDP rule,  one could  extend the definition of a plasticity kernel to $\Gamma(m_1,m_2,x)$ by adding a direct dependence on a \cadlag adapted process $(x(t))$.

We present a variation of the {\em voltage-based} model using filtered functionals of pre- and post-synaptic spike trains that fits in our formalism.
Notice that both models are not equivalent in the sense that in~\citet{clopath_voltage_2010}, sub-threshold-activity can lead to plasticity, whereas our model needs post-synaptic spikes.

If there is a pre-synaptic spike at time $t{>}0$, the synaptic weight is depressed by the quantity
\[
    B_d\left(\int_{(0,t)} e^{{-}\gamma_{d,2}(t{-}s)}\mathcal{N}_{\beta,X}(\diff s){-} \theta_{d}\right)^+,
\]
where, for $x{\in}\R$, $x^+{=}\max(x,0)$,  and if some filtered variable is above some threshold $\theta_d$ at that time.

If there is a pre-synaptic spike at time $t$, the synaptic weight will be potentiated by a quantity involving the product of two filtered variables,
\[
    B_p\left(\int_{(0,t)} e^{{-}\gamma_{p,2}(t{-}s)}\mathcal{N}_{\beta,X}(\diff s){-} \theta_{d}\right)^+\int_{(0,t)} e^{-\gamma_{p,1}(t-s)}\mathcal{N}_{\lambda}(\diff s),
\]

The plasticity kernels are thus defined by, for $m_1$, $m_2{\in}{\cal M}_p(\R_+)$,
\begin{align*}
  \Gamma_{d}^{\textup{V}}\left(m_1,m_2)(\diff t\right) &{\steq{def}}  \left[B_d\left(\int_{(0,t)} e^{{-}\gamma_{d,2}(t{-}s)}m_2(\diff s) {-}\theta_{d}\right)^+\right]\,m_1(\diff t),\\
  \Gamma_{p}^{\textup{V}}\left(m_1,m_2)(\diff t\right) &{\steq{def}}\\
&\hspace{-1cm} \left[B_p\left(\int_{(0,t)} \hspace{-2mm}e^{{-}\gamma_{p,2}(t{-}s)}m_2(\diff s) {-} \theta_{d}\right)^+\hspace{-2mm}\left(\int_{(0,t)} e^{-\gamma_{p,1}(t-s)}m_1(\diff s)\right)\right]\,m_2(\diff t).
\end{align*}

\label{ap:comparison}
\section{Plasticity Models Without Exponential Filtering}
\label{apap:noexp}
In the model of Section~\ref{sec:modelneuralplasticicty}, with Relation~\eqref{eq:Omega} we defined a filtering procedure with an exponential kernel of rate $\alpha{>}0$ for the function $\Omega_a$, where $\Omega_p(t)$ and $\Omega_d(t)$ are used to quantify the past activity of input and output neurons leading to potentiation and depression respectively.
It is given by, for  $a{\in}\{p,d\}$,
\[
\diff \Omega_a(t) = {-}\alpha\Omega_a(t) \diff t  +\Gamma_a({\cal N}_\lambda,{\cal N}_{\beta,X})(\diff t),
\]
where $\Gamma_a({\cal N}_\lambda,{\cal N}_{\beta,X})(\diff t)$ represents the plasticity kernels for potentiation, $a{=}p$, and, for depression, $a{=}d$.

Therefore, the update of the synaptic weight at time $t$ depends on a functional of the synaptic processes that happened \emph{before} $t$.
The dynamic of the synaptic weight $(W(t))$ is defined by,
\[
 \diff W(t) = M\left(\Omega_p(t),\Omega_d(t), W(t)\right)\diff t,
\]
Several studies of computational neuroscience have investigated the role of STDP in a stochastic setting. See~\citet{kempter_hebbian_1999,kistler_modeling_2000,roberts_computational_1999,rubin_equilibrium_2001,morrison_phenomenological_2008} for example.  These references use more ``direct'' dynamics for the synaptic weight.  The update at time $t$  depends only on the current synaptic plastic processes $\Gamma_a({\cal N}_\lambda,{\cal N}_{\beta,X})(\diff t)$ at time $t$, instead of a smoothed version over the past activity. The associated model can be defined so that the corresponding synaptic weight process $(\overline{W}(t))$ satisfies the relation
\[
  \diff \overline{W}(t) = \displaystyle \overline{M}\left(\Gamma_p(\mathcal{N}_\lambda,\mathcal{N}_{\beta,\overline{X}}),\Gamma_d(\mathcal{N}_\lambda,\mathcal{N}_{\beta,\overline{X}}), \overline{W}(t)\right)(\diff t),
    \]
for some functional $\overline{M}$. 
\subsubsection*{Biological Arguments For Exponential Filtering}
It should be noted that the model associated to $(\overline{W}(t))$ does not seem to be in agreement with observations of numerous experimental studies. See~\citet{bi_synaptic_1998,fino_bidirectional_2005,feldman_spike-timing_2012}. In a classical experiment, the protocol to induce plasticity consists in stimulating both neurons at a certain frequency a fixed number of times with a fixed delay $\Delta t$, over a period of up to one or two minutes (60-100 pairings at 1~Hz for example). This part is designed to reproduce conditions of correlations between the two neurons, when mechanisms of plasticity are known to be triggered.  However, measurements of the synaptic weight show that changes take place on a different timescale. After the end of the protocol, it is observed that at least several minutes are necessary to have a significant and stable effect on the synaptic weight. In other words, the change in synaptic weights happens long after the end of the plasticity induction.

For this reason we have chosen to use a filter, possibly with an exponential kernel, on the past synaptic activity.
Therefore it does not only depend on the instantaneous synaptic variable $\Gamma_d({\cal N}_\lambda,{\cal N}_{\beta,X})(\diff t)$ at time $t$, but on the whole past $\Gamma_d({\cal N}_\lambda,{\cal N}_{\beta,X})(\diff s)$, $s{\le}t$, with a smoothing exponential kernel which gives the desired dynamical feature for the synaptic weight.
Another recent article~\citet{robinson_induction} also takes this fact into account by adding an ``ìnduction'' function to the classical models of STDP.

\subsection*{ A Toy Example}
We define 
\[
\begin{cases}
  M(\omega_p,\omega_d,w)=\omega_p{-}\omega_d, &(\omega_p,\omega_d,w){\in}\R_+^2{\times}\R,\\
  \overline{M}(\Gamma_1,\Gamma_2,w)=\Gamma_1{-}\Gamma_2, &\Gamma_1,\Gamma_2{\in}{\cal M}_+(\R_+), \\
  \Gamma_p(\diff t){-}\Gamma_d(\diff t)=\left(F{-}W(t)\right)\diff t,
\end{cases}
\]
with $F{>}0$.
The equations for the time evolution of synaptic weights are given by
\[
    \frac{\diff \overline{W}(t)}{\diff t} = \eps \left(F{-}\overline{W}(t)\right) \text{ and }
  \frac{\diff W(t)}{\diff t} = \alpha^2\int_{0}^t e^{-\alpha(t-s)} (F{-}W(s))\diff s,
\]
with the initial condition $W(0){=}\overline{W}(0){=}w_0{>}0$. We get that
\[
\overline{W}(t)=F{+}(w_0{-}F)e^{-\eps t}, \quad t{\ge}0,
\]
so that $(\overline{W}(t))$ converges to $F$ as $t$ gets large, as it can be expected. By differentiating the relation for $(W(t))$ we obtain,
\[
\frac{\diff^2 W(t)}{\diff t^2} +\alpha \frac{\diff W(t)}{\diff t}+ W(t)= F,
\]
with $W(0){=}w_0$ and $W'(0){=}0$. If we take $\alpha{=}2\eps$ with $\eps{<}1$, we get that
\[
W(t)=F+(w_0{-}F)e^{-\eps t }\left(\cos\left(t\sqrt{1{-}\eps^2}\right)+\sqrt{\frac{\eps^2}{1-\eps^2}}\,\sin\left(t\sqrt{1{-}\eps^2}\right)\right),
\]
in particular $((W(t){-}\overline{W}(t))e^{\eps t}/(w_0{-}F))$ is a periodic function with maximal value of the order of $1/\eps$.
Both functions $(W(t))$ and $(\overline{W}(t))$ converge to $F$ as $t$ goes to infinity at the same exponential rate but differ at the second order.

A comparison of both models is also done in Section~\ref{ap:figures} of the Appendix and illustrated for pair-based rules in Figure~\ref{figS:pairbased} and for calcium-based ones in Figure~\ref{figS:calciumbased}.

\section{Graphical Representation of Models of Plasticity}
\label{ap:figures}
In this section, we will consider several examples of simple dynamics of the Markovian system defined in Section~\ref{sec:markov}.

We will start by comparing the effect of three different Hebbian pair-based rules, both on model with, Section~\ref{sec:markov},  and without,  Section~\ref{apap:noexp}, exponential filtering. Then, we will focus on calcium-based models and show that the discrete model of Section~\ref{ap:stochqueu} can be a good approximation of the continuous model of Section~\ref{secsecsec:cbmclassM}.

We consider two different timescales to compare the induction of plasticity in the model with/without exponential filtering,
\begin{itemize}
\item A fast timescale, on the order of the membrane potential dynamics (see plain black line under each row), where the input and output spike patterns are presented.
\item A slow timescale (20 times slower in this example), on the order of the synaptic weight modifications (see dotted black line), where no input is presented.
\end{itemize}

Input and output spikes patterns are fixed in both Figures (see first row).

\subsection{Pair-based STDP Rules (Figure~\ref{figS:pairbased})}

\begin{figure}[hp!]
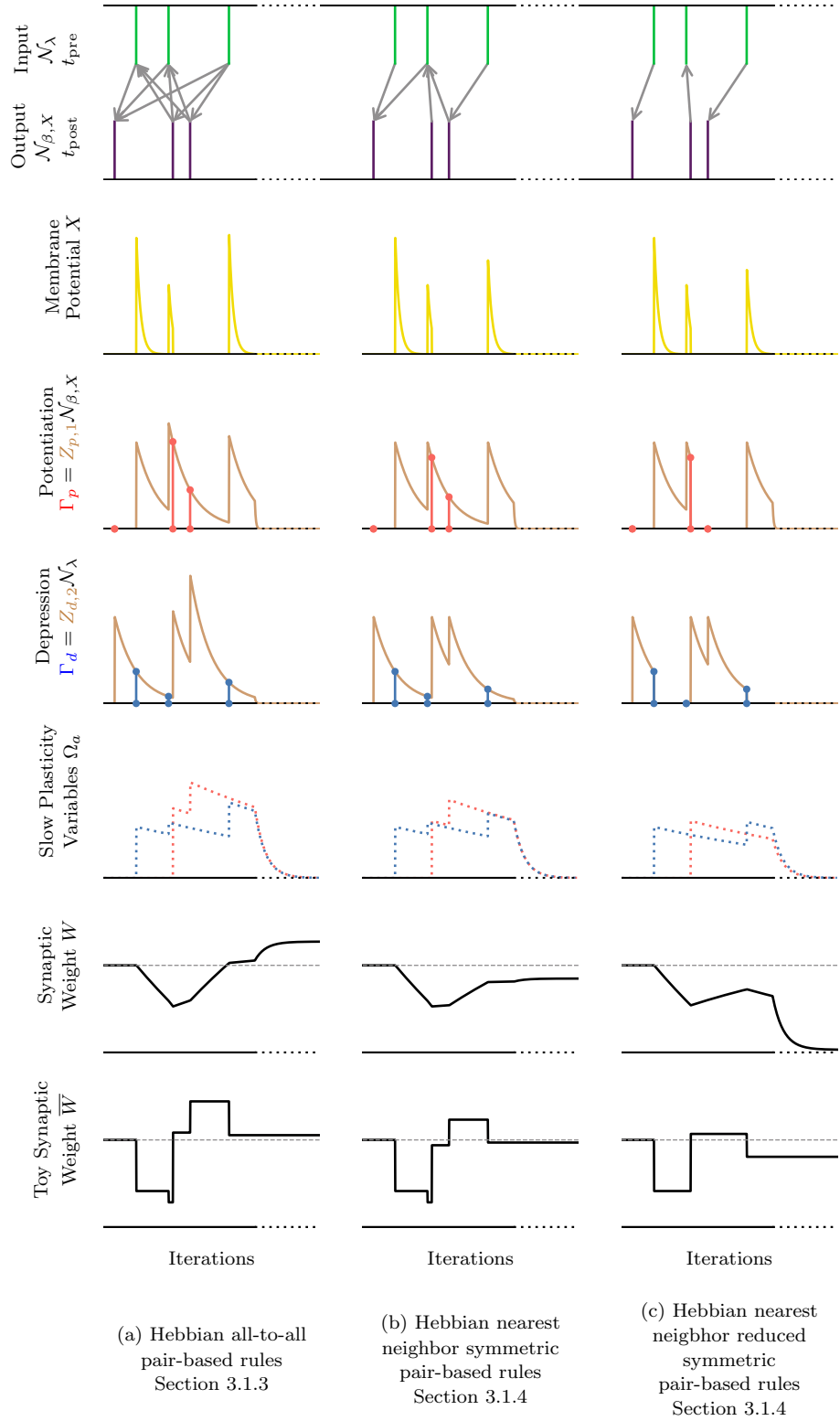

\makebox[\textwidth][c]{
\inputpgf{Figures}{Figure_S1.pgf}}
\caption{Synaptic Plasticity Kernels for Pair-Based Rules}
\label{figS:pairbased}
\end{figure}

In this section, we describe the dynamics of the different stochastic processes involved in the pair-based STDP model.

In particular, we compare the various interpretation of the pair-based rules that are described in Section~\ref{secsecsec:pairbased} in Figure~\ref{figS:pairbased},
\begin{enumerate}
\item All-to-all Model,
\item Nearest Neighbor Symmetric Model,
\item Nearest Neighbor Reduced Symmetric Model.
\end{enumerate}
The different interactions are represented by grey arrows (first row).

Exponential STDP curves are considered with their associated Markovian description, see Section~\ref{ap:exppbgen}.

Finally, we focus on Hebbian STDP rules with $B_{d,1}{=}0$ and $B_{p,2}{=}0$.

In the second row, the time evolution of the membrane potential,

\[
     \diff X(t) = \displaystyle -X(t)\diff t+W(t{-})\mathcal{N}_\lambda(\diff t)-X(t-)\mathcal{N}_{\beta,X}(\diff t),\\
\]
is presented.
Two interesting facts are to be noted here, at each pre-synaptic spike (green, first row), the current value of the synaptic weight $W(t{-})$ is added to the membrane potential $X(t)$.
It can be seen in this example that the size of the jump varies across time.
In addition,  a complete reset of $X$ occurs after a post-synaptic spike (purple, first row), corresponding to $g(x){=}x$.

Then we focus on the instantaneous synaptic variables $Z_{p,1}$ (brown, third row) and $Z_{d,2}$ (brown, fourth row), that follows different dynamics depending on the rule chosen.
\begin{enumerate}
\item For \emph{all-to-all} pairings, each synaptic spike is paired with all previous post-synaptic spikes, and conversely. They are already described in the main text, by the set of equations, for $a{\in}\{p,d\}$,
    \[
    \begin{cases}
    \quad &\hspace{-3mm}\diff Z_{p,1}(t) ={-}\gamma_{p,1}Z_{p,1}(t)\diff t{+}B_{p,1}\mathcal{N}_{\lambda}(\diff t),\\
    \quad &\hspace{-3mm}\diff Z_{d,2}(t) ={-}\gamma_{d,2}Z_{d,2}(t)\diff t{+}B_{d,2}\mathcal{N}_{\beta,X}(\diff t).
    \end{cases}
    \]
    All pairs of pre-synaptic and post-synaptic spikes are taken into account.
\item For \emph{nearest neighbor symmetric} scheme, each pre-synaptic spike is paired with the last post-synaptic spike, and conversely, the system changes slightly:
    \[
    \begin{cases}
    \quad &\hspace{-3mm}\diff Z_{p,1}(t) ={-}\gamma_{p,1}Z_{p,1}(t)\diff t{+}(B_{p,1}{-}Z_{p,1}(t{-}))\mathcal{N}_{\lambda}(\diff t),\\
    \quad &\hspace{-3mm}\diff Z_{d,2}(t) ={-}\gamma_{d,2}Z_{d,2}(t)\diff t{+}(B_{d,2}{-}Z_{d,2}(t{-}))\mathcal{N}_{\beta,X}(\diff t).
    \end{cases}
    \]
    The variable  $Z_{p,1}$, resp. $Z_{d,2}$ is reset to $B_{p,1}$, resp. $B_{d,2}$, after a pre-synaptic spike, resp. post-synaptic spike.
    \item For \emph{nearest neighbor reduced symmetric} scheme, where only immediate pairing matters, we have:
    \[
    \begin{cases}
    \quad &\hspace{-3mm}\diff Z_{p,1}(t) ={-}\gamma_{p,1}Z_{p,1}(t)\diff t{+}(B_{p,1}{-}Z_{p,1}(t{-}))\mathcal{N}_{\lambda}(\diff t){-}Z_{p,1}(t{-})\mathcal{N}_{\beta,X}(\diff t),\\
    \quad &\hspace{-3mm}\diff Z_{d,2}(t) ={-}\gamma_{d,2}Z_{d,2}(t)\diff t{+}(B_{d,2}{-}Z_{d,2}(t{-}))\mathcal{N}_{\beta,X}(\diff t){-}Z_{d,2}(t{-})\mathcal{N}_{\lambda}(\diff t),
    \end{cases}
    \]
    The variable  $Z_{p,1}$ is reset to $B_{p,1}$, after a pre-synaptic spike and to $0$ after a post-synaptic spike, and conversely for $Z_{d,2}$.
\end{enumerate}
This simple example shows how different pair-based rules shape the instantaneous plasticity variables $Z$.
This dependence is subsequently transferred to the potentiation kernel $\Gamma_p$ (red, third row) and the depression kernel $\Gamma_d$  (blue, fourth row).
With exponential pair-based models, we have $n_{a,0}(z){=}0$, $n_{d,1}(z){=}z_{d,2}$, $n_{p,1}(z){=}0$, $n_{d,2}(z){=}0$, $n_{p,2}(z){=}z_{p,1}$, and therefore, they follow,
\[
    \begin{cases}
    \quad &\hspace{-3mm}\Gamma_p(\diff t) = Z_{p,1}(t-){\cal N}_{\beta,X}(\diff t)\\
    \quad &\hspace{-3mm}\Gamma_d(\diff t) = Z_{d,2}(t-)\mathcal{N}_\lambda(\diff t).
    \end{cases}
\]

It is then not surprising to observe that for a same sequence of pre- and post-synaptic spikes the plasticity kernels are different.

Consequently, it is the same for the slow plasticity variables $\Omega_p$ (red, fifth row) and $\Omega_d$ (blue, fifth row), that follows,
\[
    \begin{cases}
    \quad &\hspace{-3mm}\diff \Omega_p(t) = \displaystyle -\alpha\Omega_p(t)\diff t + Z_{p,1}(t-){\cal N}_{\beta,X}(\diff t)\\
    \quad &\hspace{-3mm}\diff \Omega_d(t) = \displaystyle -\alpha\Omega_d(t)\diff t + Z_{d,2}(t-)\mathcal{N}_\lambda(\diff t),
    \end{cases}
\]

We choose in this example a linear function $M$, leading to the following time evolution of the synaptic weight (sixth row),

\[
    \diff W(t) = \displaystyle \left(\Omega_p(t) - \Omega_d(t)\right)\diff t.
\]
This example shows that a simple change in the STDP rule can lead to very different dynamics for the synaptic weight.
All-to-all rules lead to global potentiation (the dotted line represents the initial value) whereas nearest neighbor rules lead to depression.

Finally, as can be expected from the slow plasticity variables $\Omega_a$ that are still positive long after the end of the stimulus (see in the dotted part), the synaptic weight is modified long after the patterns of spikes.

On the contrary, considering the model without exponential filtering (seventh row),
\[
    \diff \overline{W}(t) = \displaystyle \Gamma_p(\diff t) - \Gamma_d(\diff t),
\]
we see that in that case, the synaptic weight is only updated during the stimulus.
We notice that the polarity of the global plasticity is the same as with exponential filtering, but the dynamics are completely different, as showed with the toy model in Section~\ref{apap:noexp}.

\subsection{Calcium-based STDP Rules (Figure~\ref{figS:calciumbased})}

\begin{figure}[hp!]
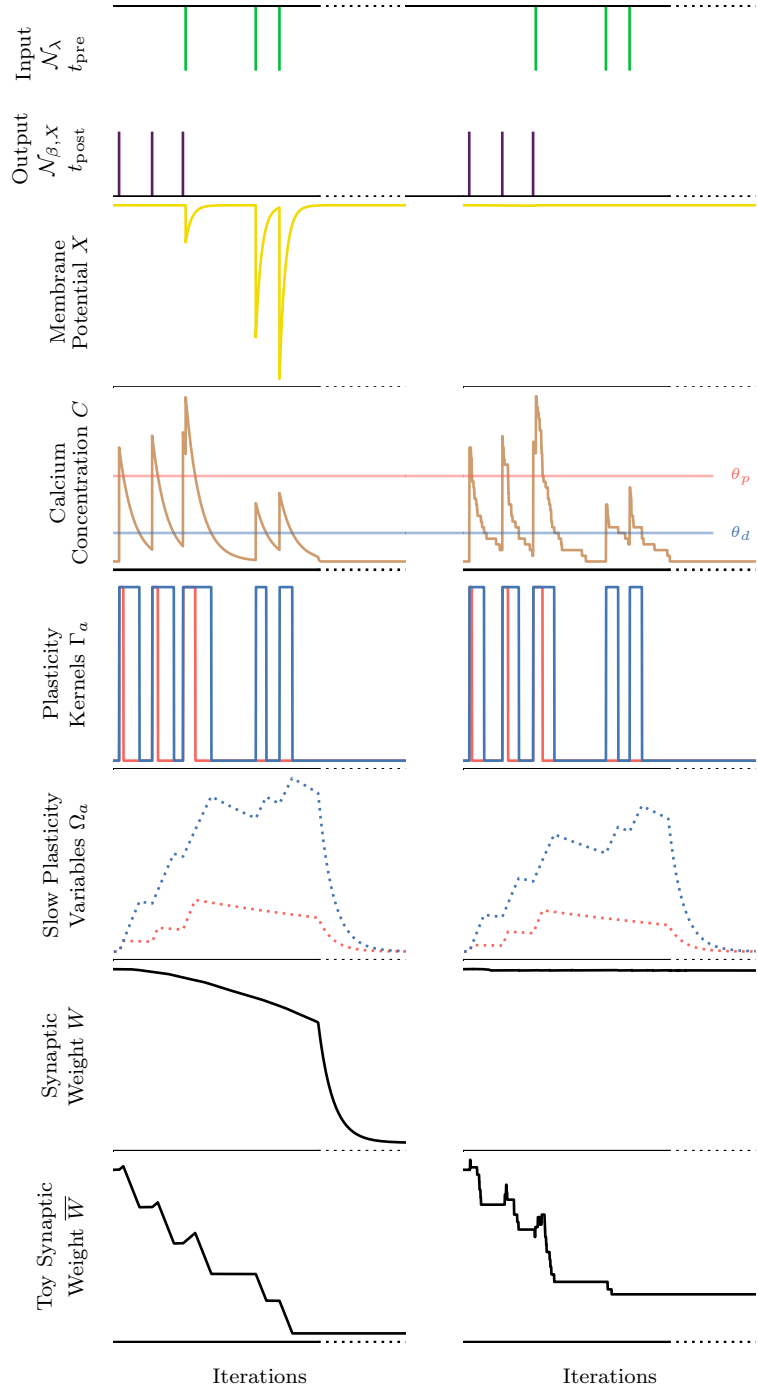

\makebox[\textwidth][c]{
\inputpgf{Figures}{Figure_S2.pgf}}
\caption{Synaptic Plasticity Kernels for Calcium-Based Rules}
\label{figS:calciumbased}
\end{figure}

In this section, we focus on the dynamics of the calcium-based models,
\begin{enumerate}
\item the continuous version, described in Section~\ref{secsecsec:cbmclassM};
\item the discrete version from Section~\ref{ap:stochqueu}.
\end{enumerate}

The continuous membrane potential (second row, left) follows,
\[
    \diff X(t) = \displaystyle -X(t)\diff t+W(t{-})\mathcal{N}_\lambda(\diff t)-\mathcal{N}_{\beta,X}(\diff t).\\
\]
We consider a different function $g(x){=}1$ than in the previous case.
Its discrete analogue (second row, right) verifies,
\[
    \diff X(t) = \displaystyle{-}\sum_{i=1}^{X(t-)}\mathcal{N}_{1,i}(\diff t) +W(t{-})\mathcal{N}_{\lambda}(\diff t)-\sum_{i=1}^{X(t-)}\mathcal{N}_{\beta,i}(\diff t).\\
\]

It is plainly clear that both processes are almost identical, except that the exponential decay in the continuous model is replaced by a $M/M/\infty$ queue in the discrete case. In the case of large jumps, they lead to a similar dynamical evolution.

The same conclusions can be drawn for the calcium concentration, where the continuous version (third row, left) follows,
\[
    \diff C(t) = {-}\gamma C(t)\diff t{+}C_{1}\mathcal{N}_{\lambda}(\diff t){+}C_{2}\mathcal{N}_{\beta,X}(\diff t),
\]
and the discrete version (third row, right),
\[
    \diff C(t) = \displaystyle -\sum_{i=1}^{C(t-)}\mathcal{N}_{\gamma,i}(\diff t)+ C_1\mathcal{N}_{\lambda}(\diff t)+C_2\sum_{i=1}^{X(t-)}\mathcal{N}_{\beta,i}(\diff t).
\]

In both cases, the plasticity kernels $\Gamma_p$ (fourth row, red) and $\Gamma_d$ (fourth row, blue) verify,

\[
    \begin{cases}
    \quad &\hspace{-3mm}\Gamma_p(\diff t) = \ind{C(t-){\geq}\theta_p}\diff t\\
    \quad &\hspace{-3mm}\Gamma_d(\diff t) = \ind{C(t-){\geq}\theta_d}\diff t.
    \end{cases}
\]

When the calcium reaches the thresholds $\theta_p$ for potentiation (third row, red) and $\theta_d$ (third row, blue), the plasticity kernels are ``activated'' and are equal to $\diff t$.
We see that both models leads to similar values of the kernels, even if some discrepancies start to appear.

The slow plasticity variables (fifth row) are just obtained by integration of the kernels with an exponential filtering,
\[
    \begin{cases}
    \quad &\hspace{-3mm}\diff \Omega_p(t) = \displaystyle -\alpha\Omega_p(t)\diff t + \Gamma_p(\diff t)\diff t\\
    \quad &\hspace{-3mm}\diff \Omega_d(t) = \displaystyle -\alpha\Omega_d(t)\diff t + \Gamma_d(\diff t)\diff t.
    \end{cases}
\]
A second discretization is applied in the synaptic update, the continuous version (sixth row, left) verifies,
\[
    \diff W(t) = \displaystyle \left(A_p\Omega_p(t) - A_d\ind{W(t-){\ge}0}\Omega_d(t)\right)\diff t,
\]
and the discrete one (sixth row, right),
\[
    \diff W(t) =\displaystyle A_p{\cal N}_{\Omega_{p}(t-)}(\diff t)-A_d\ind{W(t-){\ge}A_d}{\cal N}_{\Omega_{d}(t-)}(\diff t).
\]
We note here that we need to force $W$ to stay non-negative in order to have a valid description of the system.
We observe that, even after two different discretizations, both synaptic weights follow a similar evolution.

Using a model without exponential filtering (seventh row) leads to a different dynamical evolution of the synaptic weight, for the continuous model,

\[
    \diff \overline{W}(t) = \displaystyle A_p\Gamma_p(\diff t) - A_d\ind{W(t-){\ge}0}\Gamma_d(\diff t),
\]
and the discrete one,
\[
    \diff \overline{W}(t) =\displaystyle
    A_p\ind{C(t-){\geq}\theta_p}{\cal N}^1_{1}(\diff  t)-A_d\ind{W(t-){\ge}A_d, C(t-){\geq}\theta_d}{\cal N}^2_{1}(\diff t).
\]

As a conclusion, the discrete models approximate well the continuous one and therefore, using the exact expressions of the discrete model can give an interesting insight on the dynamics of the continuous model.

\section{Fast Systems of STDP models}\label{ap:generator}
We first start with the generator of a general STDP of class~${\cal M}$ as in Definition~\ref{def:classM}. 
For $u{=}(x,z,\omega_p,\omega_d,w){\in}\mathcal{S}_{\mathcal{M}}(\ell)$ and $f{\in}\mathcal{C}_b^1\left(\mathcal{S}_{\mathcal{M}}(\ell)\right)$, i.e.~$f$ is a bounded ${\cal C}^1${-}function, and all its respective derivatives are bounded, by using Equations~\eqref{eq:markov}, it is not difficult to show that the extended infinitesimal generator ${\cal A}$ of $(U(t))$ can  be expressed as,
\begin{align*}
  {\cal A}(f)(u)&=
  \left({-}\alpha\omega_p{+}n_{p,0}(z) \right)\frac{\partial f}{\partial \omega_p}(u)
{+}\left({-}\alpha\omega_d{+}n_{d,0}(z) \right)\frac{\partial f}{\partial \omega_d}(u)\\
&  \quad{-}x\frac{\partial f}{\partial x}(u){+}\croc{{-}\gamma {\odot} z {+} k_0,\frac{\partial f}{\partial z}(u)}{+}M(\omega_p,\omega_d,w)\frac{\partial f}{\partial w}(u)\notag\\
& \quad{+}\lambda\left[\rule{0mm}{5mm}f\left(\rule{0mm}{4mm}u{+}we_1{+}k_1(z){\odot} \overline{e}_2{+}n_{p,1}(z)e_{\ell+2}{+}n_{d,1}(z)e_{\ell+3}\right){-}f\left(u\right)\right]\notag\\
& \quad{+}\beta(x)\left[\rule{0mm}{5mm}f\left(\rule{0mm}{4mm}u{-}g(x)e_1{+}k_2(z){\odot} \overline{e}_2{+}n_{p,1}(z)e_{\ell+2}{+}n_{d,1}(z)e_{\ell+3}\right){-}f(u)\right]\notag
\end{align*}
with the following notations, $e_i$ is the unit vectors for the coordinates with index~$i$.
The notation
\[
\left(\frac{\partial f}{\partial z}(u)\right)\steq{def} \left(\frac{\partial f}{\partial u_i}(u), 2{\le}i{\le}\ell{+}1\right)
\]
is for the gradient vector with respect to the coordinates associated to $z$, i.e.~from index $2$ to index $\ell{+}1$.
Finally $\overline{e}_2$ is the vector whose coordinates are $1$ for the indices  associated to $z$ and $0$ elsewhere and, for $a{\in}\R_+^\ell$, the quantity  $a{\odot}\overline{e}_2$ is  the vector whose $i$th coordinate is $a_{i-1}$, for $2{\le}i{\le}\ell{+}1$, and $0$ otherwise.

For sake of completeness, we detail the processes of fast variables for the classical STDP rules described in Section~\ref{secsec:plasticitykernel}.

\subsection{Pair-Based Models with Exponential Kernels $\Phi$}
\label{ap:exppbgen}
For pair-based mechanisms, we follow the classification discussed in~\citet{morrison_phenomenological_2008}:
\begin{itemize}
\item For \emph{all-to-all} pairings,  each synaptic spike is paired with all previous post-synaptic spikes, and conversely. They are already described in the main text, by the set of equations, for $a{\in}\{p,d\}$,
    \[
    \begin{cases}
    \quad &\hspace{-3mm}\diff X(t)  = {-}X(t)\diff t{+}w\mathcal{N}_{\lambda}(\diff t){-}g\left(X(t{-})\right)\mathcal{N}_{\beta,X}\left(\diff t\right),\\
    \quad &\hspace{-3mm}\diff Z_{a,1}(t) ={-}\gamma_{a,1}Z_{a,1}(t)\diff t{+}B_{a,1}\mathcal{N}_{\lambda}(\diff t),\\
    \quad &\hspace{-3mm}\diff Z_{a,2}(t) ={-}\gamma_{a,2}Z_{a,2}(t)\diff t{+}B_{a,2}\mathcal{N}_{\beta,X}(\diff t).
    \end{cases}
    \]
\item In the \emph{nearest neighbor symmetric} scheme each pre-synaptic spike is paired with the last post-synaptic spike, and conversely. The system changes slightly:
    \[
    \begin{cases}
    \quad &\hspace{-3mm}\diff X(t)  = {-}X(t)\diff t{+}w\mathcal{N}_{\lambda}(\diff t){-}g\left(X(t{-})\right)\mathcal{N}_{\beta,X}\left(\diff t\right),\\
    \quad &\hspace{-3mm}\diff Z_{a,1}(t) ={-}\gamma_{a,1}Z_{a,1}(t)\diff t{+}(B_{a,1}{-}Z_{a,1}(t{-}))\mathcal{N}_{\lambda}(\diff t),\\
    \quad &\hspace{-3mm}\diff Z_{a,2}(t) ={-}\gamma_{a,2}Z_{a,2}(t)\diff t{+}(B_{a,2}{-}Z_{a,2}(t{-}))\mathcal{N}_{\beta,X}(\diff t).
    \end{cases}
    \]
    The variable  $Z_{a,1}$, resp. $Z_{a,2}$ is reset to $B_{a,1}$, resp. $B_{a,2}$, after a pre-synaptic spike, resp. post-synaptic spike.
    \item For \emph{nearest neighbor symmetric reduced} scheme, where only immediate pairing matters, we have:
    \[
    \begin{cases}
    \quad &\hspace{-3mm}\diff X(t)  = {-}X(t)\diff t{+}w\mathcal{N}_{\lambda}(\diff t){-}g\left(X(t{-})\right)\mathcal{N}_{\beta,X}\left(\diff t\right),\\
    \quad &\hspace{-3mm}\diff Z_{a,1}(t) ={-}\gamma_{a,1}Z_{a,1}(t)\diff t{+}(B_{a,1}{-}Z_{a,1}(t{-}))\mathcal{N}_{\lambda}(\diff t){-}Z_{a,1}(t{-})\mathcal{N}_{\beta,X}(\diff t),\\
    \quad &\hspace{-3mm}\diff Z_{a,2}(t) ={-}\gamma_{a,2}Z_{a,2}(t)\diff t{+}(B_{a,2}{-}Z_{a,2}(t{-}))\mathcal{N}_{\beta,X}(\diff t){-}Z_{a,2}(t{-})\mathcal{N}_{\lambda}(\diff t),
    \end{cases}
    \]
    \end{itemize}
for exponential pair-based models, with $n_{a,0}(z){=}0$, $n_{a,1}(z){=}z_{a,2}$ and $n_{a,2}(z){=}z_{a,1}$.

\subsection{Nearest Pair-Based Models with General Kernels $\Phi$}
In the case of nearest pair-based models, we have a simple description of the system, based on the time since the last spike as detailed in Section~\ref{STDPNS}.
We define $(Z(t)){=}(Z_{1}(t),Z_{2}(t))$, such that,
\[
\begin{cases}
\diff Z_{1}(t)= \diff t{-}Z_{1}(t{-})\,\mathcal{N}_{\lambda}(\diff t),\\
\diff Z_{2}(t)= \diff t{-}Z_{2}(t{-})\,\mathcal{N}_{\beta,X}(\diff t).
\end{cases}
\]
In this setting, both nearest models are of class ${\cal M}$:
\begin{itemize}
\item The nearest neighbor symmetric model of Relation~\eqref{eq:FPa_nearestPS}, with
  \[
  n_{a,0}(z){=}0,\quad n_{a,1}(z){=} \Phi_{a,2}(z_2), \quad n_{a,2}(z){=} \Phi_{a,1}(z_1).
  \]
 \item The nearest neighbor symmetric reduced model of Relation~\eqref{eq:FPa_nearestPR}, with
  \[
  n_{a,0}(z){=}0,\quad n_{a,1}(z){=} \Phi_{a,2}(z_2)\ind{z_{2}{\le}z_1}, \quad n_{a,2}(z){=} \Phi_{a,1}(z_1)\ind{z_{1}{\le}z_2}.
  \]
\end{itemize}

In fact, we have here two different Markovian systems that represents the same dynamics for nearest exponential STDP rules.

\subsection{Triplet-Based Models}
Generator for triplet-based mechanisms can also be defined in a similar way, see~\citet{babadi_stability_2016} for a list of different implementations.
    \begin{itemize}
    \item The suppression model of Section~\ref{secsecsec:STDPSup} from~\citet{froemke_spike-timing-dependent_2002}, where the Markovian system is given by:
    \[
    \begin{cases}
    \quad &\hspace{-3mm}\diff X(t)  = {-}X(t)\diff t{+}w\mathcal{N}_{\lambda}(\diff t){-}g\left(X(t{-})\right)\mathcal{N}_{\beta,X}\left(\diff t\right),\\
    \quad &\hspace{-3mm}\diff Z_{a,1}(t) ={-}\gamma_{a,1}Z_{a,1}(t)\diff t {+} (1 {-} Z_{s,1}(t{-}))B_{a,1}\mathcal{N}_{\lambda}(\diff t),\\
    \quad &\hspace{-3mm}\diff Z_{a,2}(t) ={-}\gamma_{a,2}Z_{a,2}(t)\diff t {+} (1 {-} Z_{s,2}(t{-}))B_{a,2}\mathcal{N}_{\beta,X}(\diff t),\\
    \quad &\hspace{-3mm}\diff Z_{s,1}(t) ={-}\delta_{1}Z_{s,1}(t)\diff t {+} (1 {-} Z_{s,1}(t{-}))\mathcal{N}_{\lambda}(\diff t),\\
    \quad &\hspace{-3mm}\diff Z_{s,2}(t) ={-}\delta_{2}Z_{s,2}(t)\diff t {+} (1 {-} Z_{s,2}(t{-}))\mathcal{N}_{\beta,X}(\diff t),\\
    \end{cases}
    \]
    with $n_{a,0}(z){=}0$, $n_{a,1}(z){=}(1{-}z_{s,1})z_{a,2}$ and $n_{a,2}(z){=}(1{-}z_{s,2})z_{a,1}$.
    \item The triplet-based model, see~\citet{pfister_triplets_2006}, we have:
    \[
    \begin{cases}
    \quad &\hspace{-3mm}\diff X(t)  = {-}X(t)\diff t{+}w\mathcal{N}_{\lambda}(\diff t){-}g\left(X(t{-})\right)\mathcal{N}_{\beta,X}\left(\diff t\right),\\
    \quad &\hspace{-3mm}\diff Z_{a,1}(t) ={-}\gamma_{a,1}Z_{a,1}(t)\diff t{+}B_{a,1}\mathcal{N}_{\lambda}(\diff t),\\
    \quad &\hspace{-3mm}\diff Z_{a,2}(t) ={-}\gamma_{a,2}Z_{a,2}(t)\diff t{+}B_{a,2}\mathcal{N}_{\beta,X}(\diff t),\\
    \quad &\hspace{-3mm}\diff Z_{s,a,1}(t) = {-}\delta_{a,1}Z_{s,a,1}(t)\diff t{+}D_{a,1}\mathcal{N}_{\lambda}(\diff t),\\
    \quad &\hspace{-3mm}\diff Z_{s,a,2}(t) = {-}\delta_{a,2}Z_{s,a,2}(t)\diff t{+}D_{a,2}\mathcal{N}_{\beta,X}(\diff t),\\
    \end{cases}
    \]
    with $n_{a,0}(z){=}0$, $n_{a,1}(z){=}(1+z_{s,a,1})z_{a,2}$ and $n_{a,2}(z){=}(1+z_{s,a,2})z_{a,1}$.
\end{itemize}

\subsection{Calcium-Based Models}
For models of calcium-based plasticity, we have:
\begin{itemize}
    \item Calcium transients as exponential traces in~\citet{graupner_calcium-based_2012}, which is the dynamics used as an example in this paper.  The system is,
    \[
    \begin{cases}
    \quad \diff X(t) &\hspace{-3mm}= {-}X(t)\diff t{+}w\mathcal{N}_{\lambda}(\diff t){-}g(X(t{-}))\mathcal{N}_{\beta,X}(\diff t), \\
    \quad \diff C(t) &\hspace{-3mm}= {-}\gamma C(t)\diff t{+}C_{1}\mathcal{N}_{\lambda}(\diff t){+}C_{2}\mathcal{N}_{\beta,X}(\diff t).
    \end{cases}
    \]
    \item Calcium transients modeled in a discrete setting as for the example in Section~\ref{ap:stochqueu}.
    The associated Markov process has the following transitions transition rates, for $(x,c){\in}\N^2$,
\[
(x,c)\longrightarrow
\begin{cases}
\hspace{1mm}(x{+}w,c{+}C_1) & \lambda, \\
\hspace{1mm}(x{-}1,c)   & x,
\end{cases}
\hspace{2cm}
\longrightarrow
\begin{cases}
\hspace{1mm}(x,c{-}1)   & \gamma c, \\
\hspace{1mm}(x{-}1,c{+}C_2)  & \beta x.
\end{cases}
\]
\end{itemize}
The functions of calcium-based models are given by, for $a{\in}\{p,d\}$, $n_{a,0}(c){=}h_a(c)$, $n_{a,1}(x,c){=}0$ and  $n_{a,2}(c){=}0$.

\subsection{Voltage-Based Models}
Models of Section~\ref{secsecsec:STDPV}, which are adaptations of~\citet{clopath_voltage_2010} by replacing the direct dependence on filtered traces of $X$, can also be analyzed with this formalism.
The dynamics are given by
    \[
    \begin{cases}
    \quad &\hspace{-3mm}\diff X(t)  = {-}X(t)\diff t{+}w\mathcal{N}_{\lambda}(\diff t){-}g\left(X(t{-})\right)\mathcal{N}_{\beta,X}\left(\diff t\right),\\
    \quad &\hspace{-3mm}\diff Z_{p,1}(t) ={-}\gamma_{p,1}Z_{p,1}(t)\diff t{+}\mathcal{N}_{\lambda}(\diff t),\\
    \quad &\hspace{-3mm}\diff Z_{a,2}(t) ={-}\gamma_{a,2}Z_{a,2}(t)\diff t{+}\mathcal{N}_{\beta,X}(\diff t),
    \end{cases}
    \]
with $n_{a,0}(z){=}n_{p,1}(z){=}n_{d,2}(z){=}0$, $n_{p,2}(z){=}B_{p}z_{p,1}(z_{p,2}{-}\theta_d)^+$, $n_{d,1}(z){=}B_{d}(z_{d,2}{-}\theta_d)^+$.

\end{document}